\newtheorem{theorem}{Theorem}[section]
\newtheorem{lemma}[theorem]{Lemma}
\newtheorem{corollary}[theorem]{Corollary}
\newtheorem{definition}[theorem]{Definition}
\newtheorem{example}[theorem]{Example}
\newtheorem{remark}[theorem]{Remark}
\numberwithin{equation}{section}
\newcommand{\params}{{\mathsf{q}}}
\newcommand{\R}{{\mathbb{R}}}
\newcommand{\Ze}{{\mathbb Z}}
\newcommand{\N}{{\mathbb{N}}}
\newcommand{\argmin}{\textrm{arg}\min}
\DeclareMathOperator{\diff}{d}
\newcommand{\sigalg}{\mathcal{F}}
\newcommand{\filtration}{\mathds{F}}
\newcommand{\ol}{\overline}
\newcommand{\Let}{:=}
\newcommand{\EE}{\mathds{E}}
\newcommand{\PP}{\mathds{P}}
\newcommand{\tz}{\times10^}
\begin{document}

\begin{abstract}
The last decade has witnessed significant attention on networked control systems (NCS) due to their ubiquitous presence in industrial applications, and, in the particular case of wireless NCS, because of their architectural flexibility and low installation and maintenance costs. In wireless NCS the communication between sensors, controllers, and actuators is supported by a communication channel that is likely to introduce variable communication delays, packet losses, limited bandwidth, and other practical non-idealities leading to numerous technical challenges. 
Although stability properties of NCS have been investigated extensively in the literature, results for NCS under more complex and general objectives, 
and in particular results dealing with verification or controller synthesis for logical specifications, 
are much more limited.     
This work investigates how to address such complex objectives by constructively deriving symbolic models of NCS, 
while encompassing the mentioned network non-idealities. 
The obtained abstracted (symbolic) models can then be employed to synthesize hybrid controllers enforcing rich logical specifications over the concrete NCS models. 
Examples of such general specifications include properties expressed as formulae in linear temporal logic (LTL) or as automata on infinite strings. 
We thus provide a general synthesis framework that can be flexibly adapted to a number of NCS setups. We illustrate the effectiveness of the results over some case studies.
\end{abstract}

\title[Symbolic Abstractions of Networked Control Systems]{Symbolic Abstractions of Networked Control Systems}

\author[M. Zamani]{Majid Zamani$^1$} 
\author[M. Mazo Jr]{Manuel Mazo Jr$^2$}
\author[M. Khaled]{Mahmoud Khaled$^1$} 
\author[A. Abate]{Alessandro Abate$^4$} 
\address{$^{1,3}$Department of Electrical and Computer Engineering, Technical University of Munich, D-80290 Munich, Germany.}
\email{\{zamani,khaled.mahmoud\}@tum.de}
\urladdr{http://www.hcs.ei.tum.de}
\address{$^2$Delft Center for Systems and Control, Delft University of Technology, Mekelweg 2, 2628 CD, Delft, The Netherlands.}
\email{m.mazo@tudelft.nl}
\urladdr{http://www.dcsc.tudelft.nl/$\sim$mmazo}
\address{$^4$Department of Computer Science, University of Oxford, Wolfson Building, Parks Road, Oxford, OX1 3QD, UK.}
\email{alessandro.abate@cs.ox.ac.uk}
\urladdr{https://www.cs.ox.ac.uk/people/alessandro.abate}

\maketitle

\section{Introduction}
Over the last decade the analysis and synthesis of networked control systems (NCS) have received significant attention.  
NCS are ubiquitous in most industrial applications due to their many advantages over traditional control systems,   
such as increased architectural flexibility and reduced installation and maintenance costs, particularly for wireless NCS.  
The numerous non-idealities of the network in an NCS introduce new challenges for the analysis of the behavior (such as the stability) of the plant, and for the synthesis of new control schemes. The various non-idealities of the network can be broadly categorized as follows: 
(i) quantization errors; 
(ii) packet dropouts; 
(iii) time-varying sampling/transmission intervals; 
(iv) time-varying communication delays; 
and (v) communication constraints (e.g. scheduling protocols). 
The limited bandwidth of the network does not require a separate classification as it is captured by a combination of quantization errors (i) and the communication delays (iv). 
As pointed out later in the paper, category (ii) can also be incorporated in category (iv), as long as the maximum number of subsequent dropouts over the network is bounded \cite{heemels}. 

Recently, there have been many studies focused mostly on the stability properties of NCS: in \cite{bauer} (iii)-(v) are simultaneously considered; in \cite{gao} (i), (ii), and (iv) are taken into account; \cite{alur2} studies (ii) and (v); \cite{antunes} focuses on (ii) and (iii); in \cite{marieke,nathan} (ii)-(iv) are considered; and finally in \cite{nesic} (i), (iii), and (v) are taken into account. Despite all the progress on the stability analysis of NCS as reported in \cite{bauer,gao,alur2,antunes,marieke,nathan,nesic}, there are no mature results in the literature dealing with more complex objectives, such as model verification or formal (controller) synthesis for richer properties expressed as temporal logic specifications \cite{katoen08}. Examples of those specifications include linear temporal logic (LTL) formulae or automata over infinite strings \cite{katoen08}, 
which cannot be investigated with existing approaches for NCS.  
A promising direction to study these complex properties is the use of \emph{symbolic models} \cite{paulo}. A symbolic model is an abstract description of the original (concrete) dynamical model, where each abstract state (or symbol) corresponds to an aggregate of continuous states in the concrete model. 
When a finite symbolic model is obtained and is formally related to the original model via the notions of (alternating) approximate (bi)simulations \cite{paulo} or feedback refinement relations \cite{Matthias_Gunther}, one can leverage algorithmic machinery for controller synthesis of symbolic systems \cite{MalerPnueliSifakis95} to automatically synthesize hybrid controllers for the original, concrete model \cite{paulo}. 

To the best of our knowledge, the first and only results in the literature on the construction of symbolic models for NCS are \cite{pola4,pola5}: 
these results provide symbolic models for NCS obtained via gridding techniques (discretization of state and control sets); 
they simultaneously consider the network non-idealities (i), (ii), and (iv); 
they address symbolic control design with objectives only expressed in terms of non-deterministic
automata;
the possibility of out-of-order packet arrivals is not considered; 
they exclusively consider static (i.e. memoryless) symbolic controllers; 
and, furthermore, 
in order to apply standard algorithms for verification and synthesis to the obtained symbolic model
often the given specification requires an additional reformulation over an extended state-space, 
which can lead to significant computation overheads.

In this article we provide a general construction of symbolic models for NCS, 
which can directly employ available and well investigated symbolic models from the literature that are obtained exclusively for the plant (that is, without the need to encompass the presence of the network explicitly in the construction).  
As such, one can directly leverage existing results to obtain symbolic models for the plant, such as grid-based approaches in \cite{girard2,majid,Matthias_Gunther}, 
recent results in \cite{majid10,ZTA1} that do not require state-space discretization but only input-set discretization, 
or formula-guided (non-grid-based) approaches in \cite{boyan}. 
In this work we show that, 
having a symbolic model of the plant, 
one can then construct symbolic models for the overall NCS. 
As a consequence, 
as long as there exists some type of symbolic abstraction of the plant, 
one can always use the results provided in this article to construct symbolic models for the overall, complex NCS. 
As a relevant side result, 
the techniques discussed in this paper can also be used for models of stochastic plants,   
in view of recent literature providing symbolic models for such systems \cite{majid10,ZTA1,majid7,majid8}. 
In this work, we explicitly consider the network non-idealities (i), (ii), and (iv) acting on the NCS simultaneously. 
We further consider possible out-of-order packet arrivals and message rejections, 
i.e. the effect of older data being neglected because more recent one is available.
Let us also remark that this work is not limited to problems where the controller is static. 
As a result, 
without requiring any specific reformulation, we enable the study of large classes of logical specifications, 
such as those expressed as general LTL formulae or as automata on infinite strings, 
which are often shown to require dynamic (i.e. with memory) symbolic controllers (cf. the example section) \cite{katoen08}.


This paper presents a detailed and mature description of the results announced in \cite{majid13}, 
including a detailed discussion on dealing with the quantized measurements, on the symbolic controller synthesis and refinement, and on the space complexity, and several case studies. Furthermore, we have added a section on related work and provided a detailed comparison with the results in \cite{pola4,pola5}.

\section{Notations and Basic Concepts} 
\subsection{Notations}\label{II.A}
The identity map on a set $A$ is denoted by $1_{A}$. 
The symbols $\N$, $\N_0$, $\Ze$, $\R$, $\R^+$, and $\R_0^+$ denote the set of natural, nonnegative integer, integer, real, positive, and nonnegative real numbers, respectively. Given a set $A$, define $A^{n+1}=A\times{A}^n$ for any $n\in\N$. 
Given a vector \mbox{$x\in\mathbb{R}^{n}$}, we denote by $x_{i}$ the $i$-th element of $x$, and by $\Vert x\Vert$ the infinity norm of $x$.
Given an interval $[a,b]\subseteq\R$ with $a\leq{b}$, we denote by $[a;b]$ the set $[a,b]\cap\N$. We denote by $[\R^n]_{\eta}=\left\{a\in \R^n\mid a_{i}=k_{i}\eta,~k_{i}\in\mathbb{Z},~i=1,\ldots,n\right\}$.

Given a measurable function \mbox{$f:\mathbb{R}_{0}^{+}\rightarrow\mathbb{R}^n$}, the (essential) supremum of $f$ is denoted by $\Vert f\Vert_{\infty}$, where $\Vert f\Vert_{\infty}:=\text{(ess)sup}\{\Vert f(t)\Vert,t\geq0\}$. 
A continuous function \mbox{$\gamma:\mathbb{R}_{0}^{+}\rightarrow\mathbb{R}_{0}^{+}$} is said to belong to class $\mathcal{K}$ if it is strictly increasing and \mbox{$\gamma(0)=0$}; $\gamma$ is said to belong to class $\mathcal{K}_{\infty}$ if \mbox{$\gamma\in\mathcal{K}$} and $\gamma(r)\rightarrow\infty$ as $r\rightarrow\infty$. A continuous function \mbox{$\beta:\mathbb{R}_{0}^{+}\times\mathbb{R}_{0}^{+}\rightarrow\mathbb{R}_{0}^{+}$} is said to belong to class $\mathcal{KL}$ if, for each fixed $s$, the map $\beta(r,s)$ belongs to class $\mathcal{K}$ with respect to $r$ and, for each fixed nonzero $r$, the map $\beta(r,s)$ is decreasing with respect to $s$ and $\beta(r,s)\rightarrow 0$ as \mbox{$s\rightarrow\infty$}. We identify a relation \mbox{$R\subseteq A\times B$} with the map \mbox{$R:A \rightarrow 2^{B}$} defined by $b\in R(a)$ iff \mbox{$(a,b)\in R$}. Given a relation \mbox{$R\subseteq A\times B$}, $R^{-1}$ denotes the inverse relation defined by \mbox{$R^{-1}=\{(b,a)\in B\times A:(a,b)\in R\}$}. When $R$ is an equivalence relation\footnote{An equivalence relation $R\subseteq X\times{X}$ is a binary relation on a set $X$ if it is reflexive, symmetric, and transitive.} on a set $A$, we denote by $[a]$ the equivalence class corresponding to the element $a\in{A}$, by $A/R$ the set of all equivalence classes (quotient set), and by $\pi_R:A\rightarrow A/R$ the natural projection map taking a point $a\in{A}$ to its equivalence class $\pi(a)=[a]\in A/R$. 

\subsection{Control systems}\label{control_system}
The class of control systems that we consider in this paper is formalized in
the following definition.
\begin{definition}
\label{Def_control_sys}A \textit{control system} $\Sigma$ is a tuple
$\Sigma=(\mathbb{R}^{n},\mathsf{U},\mathcal{U},f)$,
where:
\begin{itemize}
\item $\mathbb{R}^{n}$ is the state space;
\item $\mathsf{U}\subseteq\mathbb{R}^{m}$ is the bounded input set;
\item $\mathcal{U}$ is a subset of the set of all measurable functions of time, from intervals of the form \mbox{$]a,b[\subseteq\mathbb{R}$} to $\mathsf{U}$, with $a<0$ and $b>0$; 
\item \mbox{$f:\mathbb{R}^{n}\times \mathsf{U}\rightarrow\mathbb{R}^{n}$} is a continuous map
satisfying the following Lipschitz assumption: for every compact set
\mbox{$Q\subset\mathbb{R}^{n}$}, there exists a constant $Z\in\mathbb{R}^+$ such that $\Vert
f(x,u)-f(y,u)\Vert\leq Z\Vert x-y\Vert$ for all $x,y\in Q$ and all $u\in \mathsf{U}$.
\end{itemize}
\end{definition}

A locally absolutely continuous curve \mbox{$\xi:]a,b[\rightarrow\mathbb{R}^{n}$} is said to be a
\textit{trajectory} of $\Sigma$ if there exists $\upsilon\in\mathcal{U}$
satisfying:
\begin{equation}\nonumber
\dot{\xi}(t)=f\left(\xi(t),\upsilon(t)\right),
\end{equation}
for almost all $t\in$ $]a,b[$. Although we have defined trajectories over open domains, we shall as well refer to trajectories \mbox{${\xi:}[0,t]\rightarrow\mathbb{R}^{n}$} defined on closed domains $[0,t]$, $t\in\mathbb{R}^{+}$, with the understanding of the existence of a trajectory \mbox{${\xi}^{\prime}:]a,b[\rightarrow\mathbb{R}^{n}$} such that \mbox{${\xi}={\xi}^{\prime}|_{[0,t]}$} with $a<0$ and $b>t$. We also write $\xi_{x\upsilon}(t)$ to denote the point reached at time $t$
under the input $\upsilon$ from the initial condition $x=\xi_{x\upsilon}(0)$; the point $\xi_{x\upsilon}(t)$ is
uniquely determined due to the assumptions on $f$ \cite{sontag1}. A control system $\Sigma$ is said to be forward complete if every trajectory is defined on an interval of the form $]a,\infty[$ \cite{sontag}.

\subsection{Notions of stability and of completeness}
Some of the existing results recalled in this paper require certain stability properties (or lack thereof) on $\Sigma$. 
First, we recall a stability property, introduced in \cite{angeli}, as defined next. 
\begin{definition}
\label{dGAS}
A control system $\Sigma$ is incrementally input-to-state stable ($\delta$-ISS) if it is forward complete and there exists a $\mathcal{KL}$ function $\beta$ and a $\mathcal{K}_\infty$ function $\gamma$ such that for any $t\in{\mathbb{R}_0^+}$, any $x,\hat x\in\R^n$, and any $\upsilon,\hat \upsilon\in\mathcal{U}$, the following condition is satisfied:  
\begin{equation}
\left\Vert \xi_{x\upsilon}(t)-\xi_{\hat x{\hat \upsilon}}(t)\right\Vert\leq\beta\left(\left\Vert x-\hat x \right\Vert,t\right)+\gamma\left(\Vert\upsilon-\hat\upsilon\Vert_\infty\right). \label{delta_PGAS}
\end{equation}
\end{definition}

Next we recall a completeness property, introduced in \cite{majid}, which can be satisfied by larger classes of (even unstable) control systems.
\begin{definition}
\label{dFC}
A control system $\Sigma$ is incrementally forward complete ($\delta$-FC) if it is forward complete and there exist continuous functions $\beta:\R_0^+\times\R_0^+\rightarrow\R_0^+$ and $\gamma:\R_0^+\times\R_0^+\rightarrow\R_0^+$ such that for each fixed $s$, the functions $\beta(r,s)$ and $\gamma(r,s)$ belong to class $\mathcal{K}_{\infty}$ with respect to $r$, and for any $t\in{\mathbb{R}_0^+}$, any $x,\hat x\in\R^n$, and any $\upsilon,\hat\upsilon\in\mathcal{U}$, the following condition is satisfied:  
\begin{equation}
\left\Vert \xi_{x\upsilon}(t)-\xi_{\hat x{\hat\upsilon}}(t)\right\Vert\leq\beta\left(\left\Vert x-\hat x \right\Vert,t\right)+\gamma\left(\Vert\upsilon-\hat\upsilon\Vert_\infty,t\right). \label{delta_FC}
\end{equation}
\end{definition}

As explained in \cite[Remark 2.3]{majid}, $\delta$-FC implies uniform continuity
of the map $\phi_t:\R^n\times \mathcal{U}\rightarrow \R^n$ defined by $\phi_t(x,\upsilon)=\xi_{x\upsilon}(t)$ for any fixed $t\in\R_0^+$.


We refer the interested readers to the results in \cite{angeli} (resp. \cite{majid}) providing a characterization (resp. description) of $\delta$-ISS (resp. $\delta$-FC) in terms of the existence of so-called \emph{incremental Lyapunov functions}.

\section{Systems \& Approximate Equivalence Notions}\label{symbolic_model}
We now recall the notion of {\it system}, 
as introduced in \cite{paulo}, 
that we later use to describe NCS as well as their symbolic abstractions. 

\begin{definition}\label{system}
A system $S$ is a tuple $S=\left(X,X_0,U,\rTo,Y,H\right)$ consisting of:
a (possibly infinite) set of states $X$; a (possibly infinite) set of initial states $X_0\subseteq{X}$;
a (possibly infinite) set of inputs $U$;
a transition relation $\rTo\subseteq X\times U\times X$;
a set of outputs $Y$;
and
an output map $H:X\rightarrow Y$.
\end{definition}

A transition $(x,u,x')\in\rTo$ is also denoted by $x\rTo^{u} x'$. 
If $x\rTo^{u} x'$, state $x'$ is called a $u$-successor of state $x$. We denote by $\textbf{Post}_u(x)$ the set of all \mbox{$u$-successors} of a state $x$, and by $U(x)$ the set of inputs $u\in{U}$ for which $\textbf{Post}_u(x)$ is nonempty. We denote by $\mathcal{T}(U,Y)$ the set of all systems associated to a set of
inputs $U$ and a set of outputs $Y$.
A system $S$ is said to be:
\begin{itemize}
\item \textit{metric}, if the output set $Y$ is equipped with a metric
$\mathsf{d}:Y\times Y\rightarrow\mathbb{R}_{0}^{+}$;
\item \textit{finite} (or \textit{symbolic}), if $X$ and $U$ are finite sets;
\item \textit{countable}, if $X$ and $U$ are countable sets;
\item \textit{deterministic}, if for any state $x\in{X}$ and any input $u\in{U(x)}$, $\left\vert\textbf{Post}_u(x)\right\vert=1$; 
\item \textit{nondeterministic}, if there exist a state $x\in{X}$ and an input $u\in{U}$ such that $\left\vert\textbf{Post}_u(x)\right\vert>1$;
\end{itemize}

Given a system $S=(X,X_0,U,\rTo,Y,H)$, we denote by $\left\vert{S}\right\vert$ the size of $S$, defined as $\left\vert{S}\right\vert:=\left\vert\rTo\right\vert$, which is equal to the total number of transitions in $S$. Note that it is more reasonable to consider $\left\vert\rTo\right\vert$ as the size of $S$ rather than $\left\vert X\right\vert$ because in practice it is the transitions of $S$ that are required to be stored rather than just the states of $S$.


We recall the notions of (alternating) approximate (bi)simulation relation, introduced in \cite{girard,pola1}, 
which are useful to relate properties of NCS to those of their symbolic models. 
First we recall the notion of approximate (bi)simulation relation, introduced in \cite{girard}.

\begin{definition}\label{ASR}
Let \mbox{$S_{a}=(X_{a},X_{a0},U_{a},\rTo_{a},Y_a,H_{a})$} and 
\mbox{$S_{b}=(X_{b},X_{b0},U_{b},\rTo_{b},Y_b,H_{b})$} be metric systems with the 
same output sets $Y_a=Y_b$ and metric $\mathsf{d}$.
For $\varepsilon\in\mathbb{R}_0^{+}$, 
a relation 
\mbox{$R\subseteq X_{a}\times X_{b}$} is said to be an $\varepsilon$-approximate simulation relation from $S_{a}$ to $S_{b}$ 
if the following three conditions are satisfied:
\begin{itemize}
\item[(i)] for every $x_{a0}\in{X_{a0}}$, there exists $x_{b0}\in{X_{b0}}$ with $(x_{a0},x_{b0})\in{R}$;
\item[(ii)] for every $(x_{a},x_{b})\in R$, we have \mbox{$\mathsf{d}(H_{a}(x_{a}),H_{b}(x_{b}))\leq\varepsilon$};
\item[(iii)] for every $(x_a,x_b)\in R$, the existence of $x_a\rTo_{a}^{u_a}x'_a$ in $S_a$ implies the existence of $x_b\rTo_{b}^{u_b}x'_b$ in $S_b$ satisfying $(x'_a,x'_b)\in R$.
\end{itemize}  
A relation $R\subseteq X_a\times X_b$ is said to be an $\varepsilon$-approximate bisimulation relation between $S_a$ and $S_b$
if $R$ is an $\varepsilon$-approximate simulation relation from $S_a$ to $S_b$ and
$R^{-1}$ is an $\varepsilon$-approximate simulation relation from $S_b$ to $S_a$.\\ 
System $S_{a}$ is $\varepsilon$-approximately simulated by $S_{b}$, 
denoted by \mbox{$S_{a}\preceq_{\mathcal{S}}^{\varepsilon}S_{b}$}, if there exists
an $\varepsilon$-approximate simulation relation from $S_{a}$ to $S_{b}$.
System $S_{a}$ is $\varepsilon$-approximately bisimilar to $S_{b}$, denoted by \mbox{$S_{a}\cong_{\mathcal{S}}^{\varepsilon}S_{b}$}, if there exists
an $\varepsilon$-approximate bisimulation relation between $S_{a}$ and $S_{b}$.
\end{definition}


As explained in \cite{pola1}, for nondeterministic systems we need to consider relationships that explicitly capture the adversarial nature
of nondeterminism. Furthermore, these types of relations become crucial to enable the refinement of symbolic controllers~\cite{paulo}.

\begin{definition}\label{AASR}
Let \mbox{$S_{a}=(X_{a},X_{a0},U_{a},\rTo_{a},Y_a,H_{a})$} and 
\mbox{$S_{b}=(X_{b},X_{b0},U_{b},\rTo_{b},Y_b,H_{b})$} be metric systems with the 
same output sets $Y_a=Y_b$ and metric $\mathsf{d}$.
For $\varepsilon\in\mathbb{R}_0^{+}$, 
a relation 
\mbox{$R\subseteq X_{a}\times X_{b}$} is said to be an alternating $\varepsilon$-approximate simulation relation from $S_{a}$ to $S_{b}$ 
if conditions (i) and (ii) in Definition \ref{ASR}, as well as the following condition, are satisfied:
\begin{itemize}
\item[(iii)] for every $(x_a,x_b)\in R$ and for every $u_a\in U_a\left(x_a\right)$ there exists some $u_b\in U_b\left(x_b\right)$ such that for every $x'_b\in\textbf{Post}_{u_b}(x_b)$ there exists $x'_a\in\textbf{Post}_{u_a}(x_a)$ satisfying $(x'_a,x'_b)\in R$.
\end{itemize}  
A relation $R\subseteq X_a\times X_b$ is said to be an alternating $\varepsilon$-approximate bisimulation relation between $S_a$ and $S_b$
if $R$ is an alternating $\varepsilon$-approximate simulation relation from $S_a$ to $S_b$ and
$R^{-1}$ is an alternating $\varepsilon$-approximate simulation relation from $S_b$ to $S_a$. \\
System $S_{a}$ is alternatingly $\varepsilon$-approximately simulated by $S_{b}$, 
denoted by \mbox{$S_{a}\preceq_{\mathcal{AS}}^{\varepsilon}S_{b}$}, if there exists
an alternating $\varepsilon$-approximate simulation relation from $S_{a}$ to $S_{b}$.
System $S_{a}$ is alternatingly $\varepsilon$-approximately bisimilar to $S_{b}$, denoted by \mbox{$S_{a}\cong_{\mathcal{AS}}^{\varepsilon}S_{b}$}, if there exists
an alternating $\varepsilon$-approximate bisimulation relation between $S_{a}$ and $S_{b}$.
\end{definition}


It can be readily seen that the notions of approximate (bi)simulation relation and of alternating approximate (bi)simulation relation coincide when the systems involved are deterministic, in the sense of Definition \ref{system}.

Let us introduce a metric system $S_\tau(\Sigma)\Let(X_\tau,X_{\tau0},U_\tau,\rTo_{\tau},Y_\tau,H_{\tau})$, 
which captures all the information contained in the forward complete control system $\Sigma$ at sampling times $k\tau$, $\forall k\in\N_0$: $X_\tau=\R^n$, $X_{\tau0}=\R^n$, $U_\tau=\mathcal{U}$, $Y_\tau=\R^n/Q$ for some given equivalence relation $Q\subseteq X_\tau\times X_\tau$, $H_\tau=\pi_Q$, and

\begin{itemize}
\item $x_\tau\rTo_{\tau}^{\upsilon_\tau}x'_\tau$ if there exists a trajectory $\xi_{x_\tau\upsilon_\tau}:[0,\tau]\rightarrow\R^n$ of $\Sigma$ satisfying $\xi_{x_\tau\upsilon_\tau}(\tau)=x'_\tau$.
\end{itemize}
Notice that the set of states and inputs of $S_\tau(\Sigma)$ are uncountable and that $S_\tau(\Sigma)$ is a deterministic system in the sense of Definition \ref{system} since (cf. Subsection \ref{control_system}) the trajectory of $\Sigma$ is uniquely determined. We also assume that the output set $Y_\tau$ is equipped with a metric $\mathsf{d}_{Y_\tau}:Y_\tau\times Y_\tau\rightarrow\R_0^+$.

We refer the interested readers to \cite{girard2,majid,majid10,ZTA1} proposing results on the existence of symbolic abstractions $S_\params(\Sigma)\Let(X_\params,X_{\params0},U_\params,\rTo_\params,Y_\params,H_\params)$ for $S_\tau(\Sigma)$. In particular, the results in \cite{girard2,majid10,ZTA1} and \cite{majid} provide symbolic abstractions $S_\params(\Sigma)$ for $\delta$-ISS and $\delta$-FC control systems $\Sigma$, respectively, such that $S_\params(\Sigma)\cong_{\mathcal{S}}^{\varepsilon}S_\tau(\Sigma)$ (equivalently $S_\params(\Sigma)\cong_{\mathcal{AS}}^{\varepsilon}S_\tau(\Sigma)$)\footnote{\label{coincide}Recall that the notions of alternating approximate (bi)simulation and approximate (bi)simulation relation coincide when the systems involved are deterministic.} and $S_\params(\Sigma)\preceq_{\mathcal{AS}}^{\varepsilon}S_\tau(\Sigma)\preceq_{\mathcal{S}}^{\varepsilon}S_\params(\Sigma)$, respectively. The results in \cite{girard2,majid} assume that $Q$ is the identity relation in the definition of $S_\tau(\Sigma)$,  implying that $Y_\tau=\R^n$ and $\pi_Q=1_{\R^n}$, $\mathcal{U}$ is the set of piecewise constant curves over intervals of length $\tau$ (cf. equation \eqref{input_set}), and the metric $\mathsf{d}_{Y_\tau}$ is the natural infinity norm metric. While the abstraction results in \cite{girard2,majid} are based on state-space discretization, the ones in \cite{majid10,ZTA1} do not require any state-space discretization, and are potentially more efficient than those in \cite{girard2,majid} when dealing with high-dimensional 
plants.

\begin{remark}
Consider a metric system $S_\tau(\Sigma)$ admitting an abstraction $S_\params(\Sigma)$. Since the plant $\Sigma$ is forward complete, one can readily verify that given any state $x_\tau\in X_\tau$, there always exists a $\upsilon_\tau$-successor of $x_\tau$, for any $\upsilon_\tau\in U_\tau$. Hence, $U_\tau(x_\tau)=U_\tau$ for any $x_\tau\in X_\tau$. Therefore, without loss of generality, one can also assume that $U_\params(x_\params)=U_\params$ for any $x_\params\in X_\params$.

\end{remark}

\section{Models of Networked Control Systems}\label{NCS}
\begin{figure}
\centering
\includegraphics[width=8.5cm]{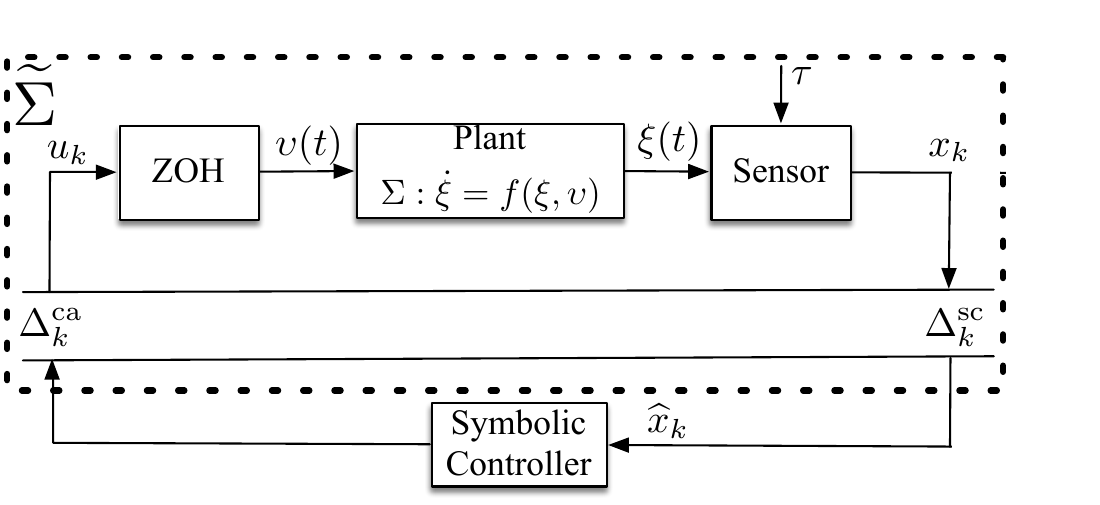}
\caption {Schematics of a networked control system $\widetilde\Sigma$.}\label{fig2}
\end{figure}

Consider an NCS $\widetilde\Sigma$ as depicted schematically in Figure \ref{fig2}, 
and similar to those discussed in \cite[Figure 1]{marieke}, \cite[Figure 1]{nathan}, and \cite[Figure 1]{pola4}. 
The NCS $\widetilde\Sigma$ includes a plant $\Sigma$, a time-driven sampler, and an event-driven zero-order-hold (ZOH), all of which are described in more detail later. 
The NCS consists of a forward complete plant $\Sigma=(\mathbb{R}^{n},\mathsf{U},\mathcal{U},f)$, 
which is connected to a symbolic controller, explained in more detail in the next subsection, over a communication network that induces delays ($\Delta^{\text{sc}}$ and $\Delta^{\text{ca}}$). The state measurements of the plant are sampled by a time-driven sampler at times $s_k\Let k\tau$, $k\in\N_0$, and we denote $x_k\Let \xi(s_k)$. 
The discrete-time control values computed by the symbolic controller at times $s_k$ are denoted by $u_k$. Time-varying network-induced delays, i.e. the sensor-to-controller delay ($\Delta_k^{\text{sc}}$) and the controller-to-actuator delay ($\Delta_k^{\text{ca}}$), are included in the model. Moreover, packet dropouts in both channels of the network can be incorporated in the delays $\Delta_k^{\text{sc}}$ and $\Delta_k^{\text{ca}}$ (increasing them), as long as the maximum number of subsequent dropouts over the network is bounded \cite{heemels}; we refer the interested readers to \cite{heemels} for more detailed information. Finally, the time-varying computation time needed to evaluate the symbolic controller is incorporated into $\Delta_k^{\text{ca}}$ \cite{heemels}. 
We assume that the time-varying delays are bounded and are integer multiples of the sampling time $\tau$, i.e. $\Delta_k^{\text{sc}}\Let N_k^{\text{sc}}\tau$, where $N_k^{\text{sc}}\in\left[N^{\text{sc}}_{\min};N^{\text{sc}}_{\max}\right]$, and $\Delta_k^{\text{ca}}\Let N_k^{\text{ca}}\tau$, where $N_k^{\text{ca}}\in\left[N^{\text{ca}}_{\min};N^{\text{ca}}_{\max}\right]$, for some $N^{\text{sc}}_{\min},N^{\text{sc}}_{\max},N^{\text{ca}}_{\min},N^{\text{ca}}_{\max}\in\N_0$. Note that this assumption implies perfect clock synchronization in the network. Nonetheless, with current technologies it is possible to reach synchronization at the micro-second level (even on wireless networks), see e.g.~\cite{dai2004tsync,elson2002fine}. Thus, one can assume that synchronization errors in general have a rather small effect that 
could be easily incorporated in the form of bounded sensor noise (due to signals excursion in that time interval). 
%
Furthermore, we model the occurrence of message rejection, i.e. the effect of older data being neglected because more recent data is available before the older data arrival, as done in \cite{marieke,nathan}. The zero-order-hold (ZOH) function (see Figure \ref{fig2}) is placed before the plant $\Sigma$ to transform the discrete-time control inputs $u_k$, $k\in\N_0$, to a continuous-time control input $\upsilon(t)=u_{k^*(t)}$, where $k^*(t)\Let\max\left\{k\in\N_0\,\,|\,\,s_k+\Delta^{\text{ca}}_k\leq{t}\right\}$. As argued in \cite{marieke,nathan}, within the sampling interval $\left[s_k,s_{k+1}\right[$, $\upsilon(t)$ can be explicitly described by
\begin{align}\label{input_com}
\upsilon(t)=u_{k+j^k_*-N^{\text{ca}}_{\max}},~~\text{for}~t\in\left[s_k,s_{k+1}\right[,
\end{align}
where $j^k_*\in\left[0;N^{\text{ca}}_{\max}-N^{\text{ca}}_{\min}\right]$, the required time-indexing shift needed to determine the control input available at the ZOH, is defined as:
\begin{align}\label{switching_time}
j^k_*=\lambda\left(\widehat{N}_{N^{\text{ca}}_{\min}},\widehat{N}_{N^{\text{ca}}_{\min}+1},\ldots,\widehat{N}_{N^{\text{ca}}_{\max}}\right),
\end{align}
and where $\widehat{N}_\ell$, for $\ell\in\left[N^{\text{ca}}_{\min};N^{\text{ca}}_{\max}\right]$, is the delay suffered by the control packet sent $\ell$ samples beforehand, namely $\widehat{N}_{N^{\text{ca}}_{\max}-i}=N^{\text{ca}}_{k-N^{\text{ca}}_{\max}+i}$ for any $i\in\left[0;N^{\text{ca}}_{\max}-N^{\text{ca}}_{\min}\right]$, and
\begin{align}\nonumber
\lambda(\widehat{N}_{N^{\text{ca}}_{\min}},\ldots,\widehat{N}_{N^{\text{ca}}_{\max}}):=\max\{\argmin_{j}\kappa(j,\widehat{N}_{N^{\text{ca}}_{\min}},\ldots,\widehat{N}_{N^{\text{ca}}_{\max}})\},
\end{align}
where
\begin{align}\nonumber
\kappa(j,\widehat{N}_{N^{\text{ca}}_{\min}},\ldots,\widehat{N}_{N^{\text{ca}}_{\max}}):=
\min\Big\{&\max\{0,\widehat{N}_{N^{\text{ca}}_{\max}-j}+j-N^{\text{ca}}_{\max}\},\max\{0,\widehat{N}_{N^{\text{ca}}_{\max}-1-j}+j-N^{\text{ca}}_{\max}+1\},\ldots,\\\notag&\max\{0,\widehat{N}_{N^{\text{ca}}_{\min}}-N^{\text{ca}}_{\min}\},1\Big\},
\end{align}
with $j\in\left[0;N^{\text{ca}}_{\max}-N^{\text{ca}}_{\min}\right]$. Note that the expression for the continuous-time control input in \eqref{input_com} and \eqref{switching_time} takes into account the possible out-of-order packet arrivals and message rejection. For example, in Figure \ref{reject}, the time-delays in the controller-to-actuator branch of the network are allowed to take values in $\{\tau,2\tau,3\tau\}$, resulting in a message rejection at time $s_{k+2}$. We refer the interested readers to \cite[Lemma 1]{marieke} to understand how the proposed choices for $j^k_*$ \eqref{switching_time}, $\lambda$, and $\kappa$, can take care of the possible out-of-order packet arrivals and message rejections.

\begin{figure}
\centering
\includegraphics[width=8.5cm]{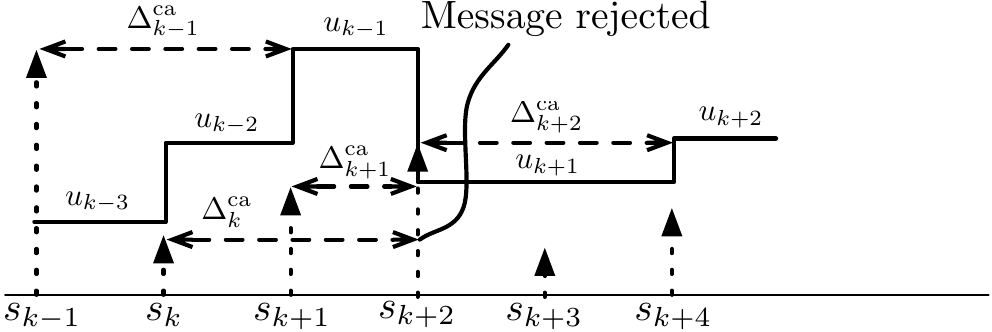}
\caption {Time-delays in the controller-to-actuator branch of the network with $\Delta^{\text{ca}}_k\in\{\tau,2\tau,3\tau\}$.}\label{reject}
\end{figure}

\subsection{Architecture of the symbolic controller}\label{controller}
A symbolic controller is a finite system that takes the observed states $x_k\in\R^n$ as inputs and produces as outputs the actions $u_k\in\mathsf{U}$ that need to be fed into the system $\Sigma$ in order to satisfy a given complex logical specification.  
We refer the interested readers to \cite{paulo} for the formal definition of symbolic controllers. 
Although for some LTL specifications (e.g. certain safety or reachability problems) it may be sufficient to consider only static controllers (i.e. without memory) \cite{girard4}, we do not limit our work by such an assumption and the proposed approach in this paper is indeed applicable to general LTL specifications \cite{katoen08}.
Due to the presence of a ZOH, from now on we assume that the set $\mathcal{U}$ contains only curves that are constant over intervals of length $\tau\in\R^+$ and take values in $\mathsf{U}$, i.e.:
\begin{align}
\label{input_set}
\mathcal{U}=\big\{&\upsilon:\R_0^+\to\mathsf{U}|\upsilon(t)=\upsilon((s-1)\tau), t\in[(s-1)\tau,s\tau[,s\in\N\big\}.
\end{align}
Correspondingly, one should update $U_\tau$ to $\mathcal{U}$ in \eqref{input_set} in the definition of $S_\tau(\Sigma)$ (cf. Section \ref{symbolic_model}).

Similar to what was assumed at the connection between controller and plant, 
we also consider possible occurrences of message rejection for the measurement data sent from the sensor to the symbolic controller. 
The symbolic controller uses $\widehat{x}_k$ as an input at the sampling times $s_k\Let k\tau$, where 
\begin{align}\label{new_state}
\widehat{x}_k=x_{k+\ell^k_*-N_{\max}^{\text{sc}}},
\end{align}
where $\ell^k_*\in\left[0;N^{\text{sc}}_{\max}-N^{\text{sc}}_{\min}\right]$ is defined as:
\begin{align}\label{switching_time1}
\ell^k_*=\lambda(\widetilde{N}_{N^{\text{sc}}_{\min}},\widetilde{N}_{N^{\text{sc}}_{\min}+1},\ldots,\widetilde{N}_{N^{\text{sc}}_{\max}}),
\end{align}
where $\widetilde{N}_\ell$, for $\ell\in\left[N^{\text{sc}}_{\min};N^{\text{sc}}_{\max}\right]$, is the delay suffered by the measurement packet sent $\ell$ samples ago, namely $\widetilde{N}_{N^{\text{sc}}_{\max}-i}=N^{\text{sc}}_{k-N^{\text{sc}}_{\max}+i}$ for any $i\in\left[0;N^{\text{sc}}_{\max}-N^{\text{sc}}_{\min}\right]$, and $\lambda$ is the function appearing in \eqref{switching_time}. 
Note that the expression for the input of the controller in \eqref{new_state} and \eqref{switching_time1} takes into account the possible out-of-order packet arrivals and message rejections. We again refer the interested readers to \cite{marieke,nathan} for more details on the proposed choice for $\ell^k_*$ \eqref{switching_time1}. Here, we assume that the symbolic controller applies its previously computed input value if it does not receive a concrete state measurement from the network, which may be the case for a small interval of time after $s_0$ due to the initialization of the NCS. 

\subsection{Describing NCS as metric systems}
As emphasized earlier, one of the main objectives of this work is to provide symbolic models for the overall NCS using symbolic models of their plants component and of the network characteristics. 
Specifically, we need to define a map taking an (in)finite system describing the plant and the minimum and maximum delays suffered in both the controller-to-actuator and the sensor-to-controller branches of the network as its inputs and providing, correspondingly, an (in)finite system describing the overall NCS as its output. 
Consider the map 
\begin{align}\label{map}
\mathcal{L}:\mathcal{T}(U,Y)\times\N_0^4\rightarrow\mathcal{T}(U,Y)
\end{align}
defined as the following: $\forall~\widetilde{N}_{\min},\widetilde{N}_{\max}\in\N_0$, where $\widetilde{N}_{\min}\leq \widetilde{N}_{\max}$, $\forall~\widehat{N}_{\min},\widehat{N}_{\max}\in\N_0$, where $\widehat{N}_{\min}\leq \widehat{N}_{\max}$, and $\forall~S_a=(X_a,X_{a0},U_a,\rTo_a,Y_a,H_a)\in\mathcal{T}(U_a,Y_a)$, we have $\mathcal{L}(S_a,\widetilde{N}_{\min},\widetilde{N}_{\max},\widehat{N}_{\min},\widehat{N}_{\max})=S_b\in\mathcal{T}(U_a,Y_a)$, where $S_b=(X_b,X_{b0},U_a,\rTo_b,Y_a, H_b)$ and
\begin{itemize}
\item $X_b=\left\{X_a\cup{q}\right\}^{\widetilde{N}_{\max}}\times U_a^{\widehat{N}_{\max}}\times[\widetilde{N}_{\min};\widetilde{N}_{\max}]^{\widetilde{N}_{\max}}\times[\widehat{N}_{\min};\widehat{N}_{\max}]^{\widehat{N}_{\max}}$, where $q$ is a dummy symbol;
\item $X_{b0}=\{(x_0,q,\ldots,q,u_0,\ldots,u_0,\widetilde{N}_{\max},\ldots,\widetilde{N}_{\max},\widehat{N}_{\max},\ldots,\widehat{N}_{\max})\,\,|\,\,x_0\in X_{a0},u_0\in{U}_a\}$;
\item $(x_1,\ldots,x_{\widetilde{N}_{\max}},u_1,\ldots,u_{\widehat{N}_{\max}},\widetilde{N}_1,\ldots,\widetilde{N}_{\widetilde N_{\max}},\widehat{N}_1,\ldots,\widehat{N}_{\widehat N_{\max}})\rTo^{u}_b(x',x_1,\ldots,x_{\widetilde N_{\max}-1},u,u_1,\ldots,\\u_{\widehat N_{\max}-1},\widetilde{N},\widetilde{N}_1,\ldots,\widetilde{N}_{\widetilde N_{\max}-1},\widehat{N},\widehat{N}_1,\ldots,\widehat{N}_{\widehat N_{\max}-1})$ for all $\widetilde{N}\in[\widetilde N_{\min};\widetilde N_{\max}]$ and all $\widehat{N}\in[\widehat N_{\min};\widehat N_{\max}]$ if there exists transition $x_1\rTo_{a}^{u_{\widehat N_{\max}-j^k_*}}x'$ in $S_a$ where $j^k_*=\lambda(\widehat{N}_{\widehat N_{\min}},\ldots,\widehat{N}_{\widehat N_{\max}})$, as defined in \eqref{switching_time}, and one of the
following holds (due to the initialization of the NCS):
\begin{itemize}
\item $x_{\widetilde N_{\max}-\ell^k_*}=q$, where $\ell^k_*=\lambda(\widetilde{N}_{\widetilde N_{\min}},\ldots,\widetilde{N}_{\widetilde N_{\max}})$, defined in \eqref{switching_time1}, and $u=u_1$;
\item $x_{\widetilde N_{\max}-\ell^k_*}\neq q$ and the choice of $u$ is free;
\end{itemize}
\item $H_b(x_1,\ldots,x_{\widetilde N_{\max}},u_1,\ldots,u_{\widehat N_{\max}},\widetilde{N}_1,\ldots,\widetilde{N}_{\widetilde N_{\max}},\widehat{N}_1,\ldots,\widehat{N}_{\widehat N_{\max}})=H_a(x_1)$ where 
with a slight abuse of notation, we assume that $H_a(q)\Let{q}$.
\end{itemize}

It can be readily seen that the system $S_b$ is (un)countable or symbolic if the system $S_a$ is (un)countable or symbolic, respectively. Although $S_a$ may be a deterministic system, $S_b$ is 
in general a nondeterministic system (if $\widetilde{N}_{\min}<\widetilde{N}_{\max}$ or $\widehat{N}_{\min}<\widehat{N}_{\max}$), since depending on the values of $\widetilde{N}$ or $\widehat{N}$, more than one $u$-successor of any state of $S_b$ may exist.

We assume additionally that the output set $Y_b$ is equipped with the same metric $\mathsf{d}_{Y_a}$, 
which is extended so that $\mathsf{d}_{Y_a}(H_a(x),H_a(q))=+\infty$ for any $x\in\R^n$ and $\mathsf{d}_{Y_a}(H_a(q),H_a(q))=0$. 

We have now all the ingredients to describe the NCS $\widetilde\Sigma$ as a metric system. 
Given $S_\tau(\Sigma)$ and the NCS $\widetilde\Sigma$, consider the metric system $S(\widetilde\Sigma):=(X,X_{0},U,\rTo,Y,H)$, capturing all the information contained in the NCS $\widetilde\Sigma$, given as $S(\widetilde\Sigma)=\mathcal{L}(S_\tau(\Sigma),N^{\text{sc}}_{\min},N^{\text{sc}}_{\max},N^{\text{ca}}_{\min},N^{\text{ca}}_{\max})$.

Note that the choice of the state space $X$ in $S(\widetilde\Sigma)$ allows us to keep track of an adequate number of measurements and control packets and the corresponding delays suffered by them, 
which is necessary and sufficient in order to consider out-of-order packet arrivals and message rejections as explained in detail in \cite{marieke,nathan}. The choice of the set of initial state $X_0$ keeps the initial input value $u_0$ in the ZOH till new control input values arrive. Moreover, assigning the maximum delay suffered by the dummy symbols ensures that those symbols will not take over an actual packet at the later iterations of the network.
The transition relation of $S(\widetilde\Sigma)$ captures in a nondeterministic fashion all the possible successors of a given state of $S(\widetilde\Sigma)$, 
based on all the possible ordering of measurements arriving to the controller, and of inputs arriving to the ZOH and ensures that the controller applies its previously computed input value if it does not receive any concrete state measurement from the network.
Let us also remark that the sets of states and inputs of $S(\widetilde\Sigma)$ are uncountable. 

\begin{remark}\label{rem_output}
Note that the output value of any state of $S(\widetilde\Sigma)$ is simply the output value of the state of the plant available at the sensors at times $s_k\Let k\tau$. We should highlight that the main role of output sets (resp. maps) in the definition of systems (cf. Definition \ref{system}) is to describe the set of atomic propositions (resp. state labeling) used in describing the specifications and, hence, used for the symbolic controller synthesis. We refer the interested readers to \cite[Chapter 5]{paulo} explaining controller synthesis schemes for some classes of specifications in which the output set plays a role; see in particular the discussion after the proof of Proposition 6.8 in \cite{paulo}. For the implementation (refinement) of symbolic controllers and their composition, one requires to deal with the states of systems rather than their outputs \cite[Proposition 8.7]{paulo}. We elaborate more on the symbolic controller synthesis and refinement in Section \ref{synthesis}.
\end{remark}

\section{Symbolic Models for NCS}\label{existence}
This section contains the main contributions of the paper. 
We show the existence and construction of symbolic models for NCS by using an existing symbolic model for the plant $\Sigma$, 
namely $S_\params(\Sigma)\Let(X_\params,X_{\params0},U_\params,\rTo_\params,Y_\params,H_\params)$.

Given the metric system $S_\params(\Sigma)$, define the new metric system $S_*(\widetilde\Sigma):=(X_*,X_{*0},U_*,\rTo_*,Y_*,H_*)$ as $S_*(\widetilde\Sigma)=\mathcal{L}(S_\params(\Sigma),N^{\text{sc}}_{\min},N^{\text{sc}}_{\max},N^{\text{ca}}_{\min},N^{\text{ca}}_{\max})$, where the map $\mathcal{L}$ is defined in \eqref{map}. 
System $S_*(\widetilde\Sigma)$ is constructed in the same way as $S(\widetilde\Sigma)$, 
but replacing continuous states, inputs, and the transition relation of $S_\tau(\Sigma)$, 
with the corresponding ones in $S_\params(\Sigma)$.  

We can now state the first pair of major technical results of this work, which are schematically represented in Figure \ref{NCS6}. 

\begin{figure}
\begin{center}
\includegraphics[width=7cm]{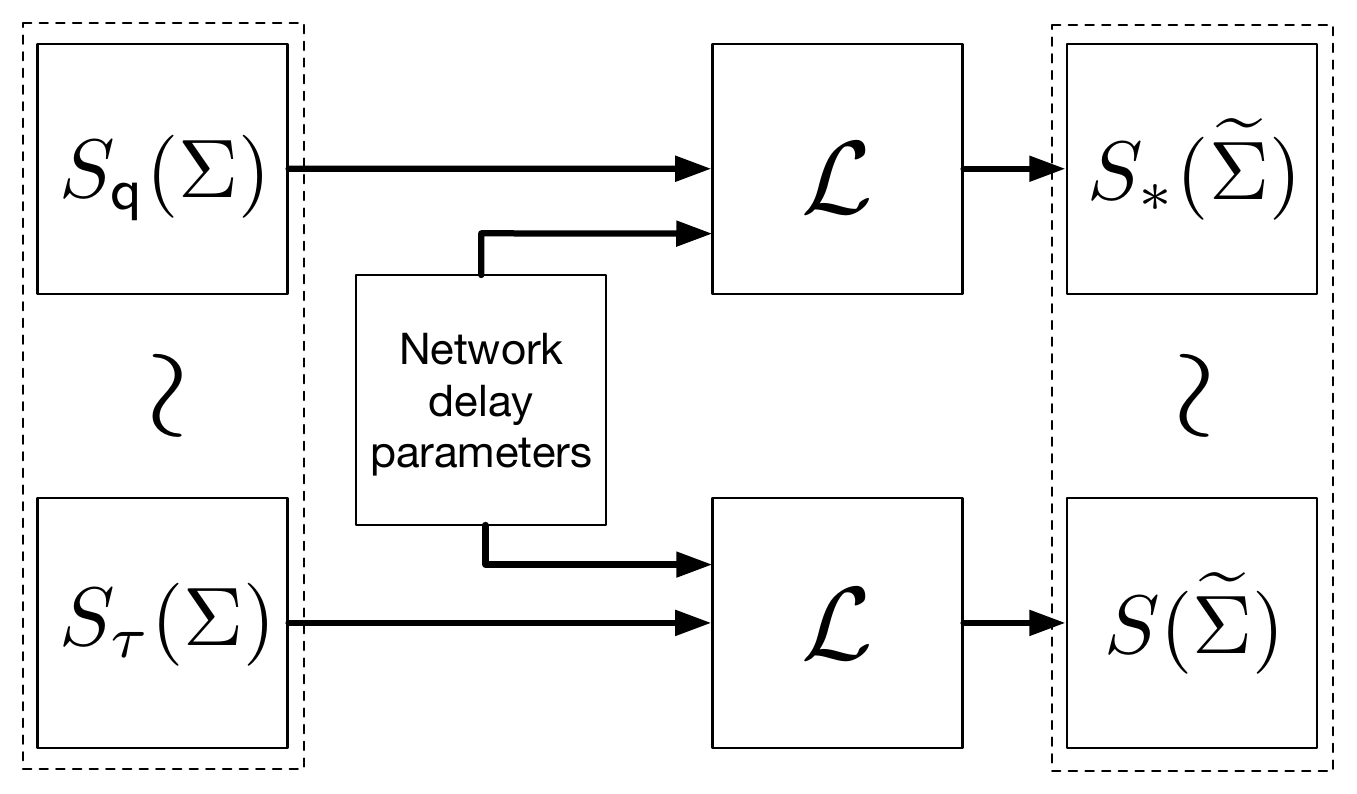}
\caption{The symbol $\sim$ represents any of the following relations: $_{\mathcal{S}}^{\varepsilon}{\succeq}$, $\preceq_{\mathcal{AS}}^{\varepsilon}$, and $\cong_{\mathcal{AS}}^{\varepsilon}$.}\label{NCS6}
\end{center}
\end{figure}

\begin{theorem}\label{main_theorem}
Consider an NCS $\widetilde\Sigma$ and suppose that there exists an abstraction $S_\params(\Sigma)$ such that $S_\params(\Sigma)\preceq_{\mathcal{AS}}^{\varepsilon}S_\tau(\Sigma)\preceq_{\mathcal{S}}^{\varepsilon}S_\params(\Sigma)$. 
Then we have \mbox{$S_{*}(\widetilde\Sigma)\preceq_{\mathcal{AS}}^{\varepsilon}S(\widetilde\Sigma)\preceq_{\mathcal{S}}^{\varepsilon}S_{*}(\widetilde\Sigma)$}.
\end{theorem}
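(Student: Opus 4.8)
The plan is to prove the two approximate simulation relations separately, constructing explicit candidate relations from the assumed relations between $S_\params(\Sigma)$ and $S_\tau(\Sigma)$ and then checking the three conditions of Definitions \ref{ASR} and \ref{AASR}. Throughout, a state of $S(\widetilde\Sigma)$ (resp. $S_*(\widetilde\Sigma)$) is a long tuple consisting of a buffer of $N^{\text{sc}}_{\max}$ plant states (with the dummy symbol $q$ allowed), a buffer of $N^{\text{ca}}_{\max}$ control inputs, and two buffers of delay counters; the key observation is that the delay-counter components are literally the same set $\left[N^{\text{sc}}_{\min};N^{\text{sc}}_{\max}\right]^{N^{\text{sc}}_{\max}}\times\left[N^{\text{ca}}_{\min};N^{\text{ca}}_{\max}\right]^{N^{\text{ca}}_{\max}}$ in both systems, so the natural candidate relation pairs up states whose delay buffers coincide exactly and whose plant-state and input buffers are componentwise related by the given relations (using $q\sim q$ in the obvious way, consistent with $H_\tau(q)=H_\params(q)=q$). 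The switching indices $j_*$ and $\ell_*$ are computed by the deterministic functions $\widehat{f},\widehat{g}$ purely from the delay buffers, hence whenever two related states are paired their $j_*$ and $\ell_*$ agree, which is what makes the whole argument go through.

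For the first inclusion $S_{*}(\widetilde\Sigma)\preceq_{\mathcal{AS}}^{\varepsilon}S(\widetilde\Sigma)$, let $R_{*}$ be an alternating $\varepsilon$-approximate simulation relation from $S_\params(\Sigma)$ to $S_\tau(\Sigma)$ (which exists since $S_\params(\Sigma)\preceq_{\mathcal{AS}}^{\varepsilon}S_\tau(\Sigma)$). Define $R\subseteq X_*\times X$ to consist of all pairs of tuples with identical delay buffers, with $(x_{*i},x_i)\in R_{*}$ for each plant-state slot (both $q$ or both related), and with $(u_{*i},\upsilon_i)\in R_*$ for each input slot; recall that $U_*=U_\params$, $U=U_\tau$, and $S_\tau(\Sigma)$ is deterministic so here $R_*$ on inputs just means equality after refinement — I would be careful and instead carry the input components along literally as equal, since in the intended construction (piecewise-constant inputs, $\params$ refining $\tau$) $U_\params\subseteq U_\tau$. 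Condition (i) follows because initial states of both systems have the same delay buffers ($N^{\text{sc}}_{\max}$ and $N^{\text{ca}}_{\max}$ repeated), $q$'s in the state buffer, and a single free initial plant state/input which can be matched using condition (i) of $R_*$. Condition (ii) follows from the definition of $\mathbf{d}_Y$ in \eqref{metric} and $\mathbf{d}_{Y_*}$: both output pairs are $\left(H(\cdot)(x_1),H(\cdot)(x_{N^{\text{sc}}_{\max}-\ell_*})\right)$, the $\ell_*$'s agree, and componentwise $R_*$ gives $\mathbf{d}_{Y_\params}\le\varepsilon$ on each coordinate (and $\mathbf{d}=0$ when both are $q$, $+\infty$ excluded since $q$ is only ever paired with $q$). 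For condition (iii) of Definition \ref{AASR}: given related tuples and an input $u_*\in U_*$, the $j_*$ in $S_*(\widetilde\Sigma)$ equals the $j_*$ in $S(\widetilde\Sigma)$ since the delay buffers agree; the $S_*$-transition requires $x_{*1}\rTo^{u_{*(N^{\text{ca}}_{\max}-j_*)}}_\params x'_*$; apply condition (iii) of $R_*$ at the pair $(x_{*1},x_1)$ with input $u_{*(N^{\text{ca}}_{\max}-j_*)}$ to obtain a matching input for $S_\tau(\Sigma)$ (which, by determinism of $S_\tau(\Sigma)$, forces the unique successor $x'$) so that $(x'_*,x')\in R_*$; the new delay components $\widetilde N,\widehat N$ range over the same sets in both systems and are copied identically; hence the resulting successor tuples are again in $R$. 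The reverse inclusion $S(\widetilde\Sigma)\preceq_{\mathcal{S}}^{\varepsilon}S_{*}(\widetilde\Sigma)$ is handled symmetrically using an $\varepsilon$-approximate simulation relation from $S_\tau(\Sigma)$ to $S_\params(\Sigma)$ (which exists since $S_\tau(\Sigma)\preceq_{\mathcal{S}}^{\varepsilon}S_\params(\Sigma)$): here condition (iii) of Definition \ref{ASR} is non-alternating, so given an $S(\widetilde\Sigma)$-transition with some choice of $\widetilde N,\widehat N$ we must exhibit an $S_*(\widetilde\Sigma)$-transition with the same $\widetilde N,\widehat N$ (same $j_*$), which follows directly by applying the plant simulation relation to the single active transition $x_1\rTo^{\upsilon_{N^{\text{ca}}_{\max}-j_*}}_\tau x'$.

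The part I expect to need the most care — the main obstacle — is the bookkeeping around the switching functions and buffer shifts: one must verify that the "active slot" of the input buffer used in the transition, namely index $N^{\text{ca}}_{\max}-j_*$, together with the shift $(x',x_1,\ldots)$ and $(\upsilon,\upsilon_1,\ldots)$, is matched consistently on both sides, and that after the shift the $\ell_*$ used in the output map of the \emph{successor} is still computed from the (shifted, but equal) delay buffers so that condition (ii) continues to hold at the next step — although condition (ii) only needs to hold for related pairs, so this is really just re-invoking the same argument. A secondary subtlety is the handling of the dummy symbol $q$ and the extended metric ($\mathbf{d}_{Y_\params}(H_\params(x),q)=+\infty$): one must check that along any reachable pair, a $q$ in a given slot of one tuple forces a $q$ in the same slot of the paired tuple, which holds because both systems start with the same pattern of $q$'s and shift them identically, and because $R_*$ (extended by $q\sim q$) is set up to respect this. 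Once these invariants are stated cleanly, conditions (i)–(iii) reduce, slot by slot, to the corresponding conditions already assumed for the plant abstraction, and the proof closes.
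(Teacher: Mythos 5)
Your overall architecture coincides with the paper's: a componentwise product relation that insists on identical delay buffers (so that $j_*$ and $\ell_*$, being deterministic functions of those buffers, agree on both sides), pairs the plant-state slots via the assumed plant relation with $q$ matched only to $q$, and then reduces conditions (i)--(iii) slot by slot to the plant-level hypotheses. The handling of the output map, the metric extension at $q$, and the shift bookkeeping are all fine.

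There is, however, a genuine gap in how you relate the \emph{input} buffers, and it is precisely the point where the theorem's generality bites. You propose to carry the inputs as literally equal, justified by the ``intended construction'' in which $U_\params\subseteq U_\tau$ and the plant relation matches each input with itself. Neither of these is a hypothesis of the theorem: $S_\params(\Sigma)$ is an arbitrary abstraction satisfying $S_\params(\Sigma)\preceq_{\mathcal{AS}}^{\varepsilon}S_\tau(\Sigma)\preceq_{\mathcal{S}}^{\varepsilon}S_\params(\Sigma)$, and condition (iii) of Definition~\ref{AASR} only guarantees, for a given $u_a\in U_\params$, the existence of \emph{some} $u_b\in U_\tau$ --- not $u_b=u_a$. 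Concretely, your verification of condition (iii) breaks in two places. First, after one NCS transition the successor buffers contain the newly injected pair $(u_*,\upsilon)$ where $\upsilon$ is whatever witness the plant relation supplies for $u_*$; with your relation this pair must be equal, which you cannot guarantee, so the invariant is not preserved. Second, when you ``apply condition (iii) of $R_*$ at $(x_{*1},x_1)$ with input $u_{*(N^{\text{ca}}_{\max}-j_*)}$,'' the matching $S_\tau$-input this produces need not be the input $\upsilon_{N^{\text{ca}}_{\max}-j_*}$ that is already sitting in the buffer of $\mathsf{x}$ and that the transition of $S(\widetilde\Sigma)$ is actually forced to use. The paper's fix is to replace equality by a \emph{semantic} matching condition baked into $\widetilde R$: the slot pair $(u_{*i},\upsilon_i)$ is admissible iff for every $(x_*,x)\in R$ there exists $x'_*\in\textbf{Post}_{u_{*i}}(x_*)$ with $\left(x'_*,\xi_{x\upsilon_i}(\tau)\right)\in R$. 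This invariant is established at injection time (where one is free to choose $\upsilon$ for the given $u_*$) and then invoked $N^{\text{ca}}_{\max}-j_*$ steps later when the pair reaches the active slot, for whatever plant-state pair is then current; it degenerates to your equality condition exactly in the special case you describe. With this modification the rest of your argument, including the symmetric non-alternating direction, goes through as you outline.
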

The proof is provided in the Appendix.

\begin{corollary}\label{main_theorem1}
Consider an NCS $\widetilde\Sigma$ and suppose that there exists an abstraction $S_\params(\Sigma)$ such that $S_{\params}(\Sigma)\cong_{\mathcal{AS}}^{\varepsilon}S_\tau(\Sigma)$. 
Then we have $S_{*}(\widetilde\Sigma)\cong_{\mathcal{AS}}^{\varepsilon}S(\widetilde\Sigma)$.
\end{corollary}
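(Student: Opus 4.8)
The plan is to derive Corollary \ref{main_theorem1} directly from Theorem \ref{main_theorem} rather than reprove anything from scratch. The hypothesis $S_\params(\Sigma)\cong_{\mathcal{AS}}^{\varepsilon}S_\tau(\Sigma)$ means there is an alternating $\varepsilon$-approximate bisimulation relation $R$ between $S_\params(\Sigma)$ and $S_\tau(\Sigma)$; by definition this gives both an alternating $\varepsilon$-approximate simulation relation from $S_\params(\Sigma)$ to $S_\tau(\Sigma)$ and one from $S_\tau(\Sigma)$ to $S_\params(\Sigma)$ (via $R^{-1}$). I would first observe that an alternating $\varepsilon$-approximate simulation is in particular an $\varepsilon$-approximate simulation in the sense of Definition \ref{ASR} is \emph{not} automatically true in general, but that is not what is needed: what is needed is that the pair of relations witnessing alternating bisimilarity also witnesses the weaker hypothesis of Theorem \ref{main_theorem}, namely $S_\params(\Sigma)\preceq_{\mathcal{AS}}^{\varepsilon}S_\tau(\Sigma)$ and $S_\tau(\Sigma)\preceq_{\mathcal{S}}^{\varepsilon}S_\params(\Sigma)$. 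The first of these is immediate from $R$. For the second, I would use the footnote-level fact (footnote \ref{coincide}) that $S_\tau(\Sigma)$ is deterministic (as noted right after its definition), so that the alternating $\varepsilon$-approximate simulation relation $R^{-1}$ from $S_\tau(\Sigma)$ to $S_\params(\Sigma)$ coincides with an ordinary $\varepsilon$-approximate simulation relation from $S_\tau(\Sigma)$ to $S_\params(\Sigma)$; hence $S_\tau(\Sigma)\preceq_{\mathcal{S}}^{\varepsilon}S_\params(\Sigma)$.

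With both hypotheses of Theorem \ref{main_theorem} in hand, applying the theorem yields $S_{*}(\widetilde\Sigma)\preceq_{\mathcal{AS}}^{\varepsilon}S(\widetilde\Sigma)\preceq_{\mathcal{S}}^{\varepsilon}S_{*}(\widetilde\Sigma)$. The remaining task is to upgrade this pair of one-directional statements to the single statement $S_{*}(\widetilde\Sigma)\cong_{\mathcal{AS}}^{\varepsilon}S(\widetilde\Sigma)$. For this I would inspect the relations actually produced in the proof of Theorem \ref{main_theorem}: that proof builds a relation $\mathcal{R}$ between the NCS system $S_{*}(\widetilde\Sigma)$ and $S(\widetilde\Sigma)$ out of the component relation $R$ on the plant abstractions, essentially componentwise on the state tuple (relating each stored plant-state entry via $R$, and requiring equality of the stored inputs up to the $R$-correspondence and equality of the stored delay counters). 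The key point is that when $R$ is a bisimulation, this componentwise lift $\mathcal{R}$ is itself an alternating $\varepsilon$-approximate bisimulation between $S_{*}(\widetilde\Sigma)$ and $S(\widetilde\Sigma)$: its inverse $\mathcal{R}^{-1}$ is the componentwise lift of $R^{-1}$, and the verification that $\mathcal{R}^{-1}$ is an alternating $\varepsilon$-approximate simulation from $S(\widetilde\Sigma)$ to $S_{*}(\widetilde\Sigma)$ is verbatim the same computation as in Theorem \ref{main_theorem} with the roles of $S_\params(\Sigma)$ and $S_\tau(\Sigma)$ swapped, which is legitimate precisely because $R^{-1}$ is an alternating $\varepsilon$-approximate simulation from $S_\tau(\Sigma)$ to $S_\params(\Sigma)$.

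Concretely, the steps in order are: (1) from $S_\params(\Sigma)\cong_{\mathcal{AS}}^{\varepsilon}S_\tau(\Sigma)$, extract $R$ and note $R^{-1}$ is an alternating $\varepsilon$-approximate simulation from $S_\tau(\Sigma)$ to $S_\params(\Sigma)$; (2) invoke Theorem \ref{main_theorem} (whose hypothesis is met, using determinism of $S_\tau(\Sigma)$ as above) to get $S_{*}(\widetilde\Sigma)\preceq_{\mathcal{AS}}^{\varepsilon}S(\widetilde\Sigma)\preceq_{\mathcal{S}}^{\varepsilon}S_{*}(\widetilde\Sigma)$; (3) note from the construction in the proof of Theorem \ref{main_theorem} that the witnessing relation $\mathcal{R}$ and its inverse are the componentwise lifts of $R$ and $R^{-1}$ respectively; (4) conclude that $\mathcal{R}$ is an alternating $\varepsilon$-approximate bisimulation between $S_{*}(\widetilde\Sigma)$ and $S(\widetilde\Sigma)$, i.e. $S_{*}(\widetilde\Sigma)\cong_{\mathcal{AS}}^{\varepsilon}S(\widetilde\Sigma)$. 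I expect the main (and really only) obstacle to be bookkeeping: making sure that the relation constructed in the proof of Theorem \ref{main_theorem} is genuinely symmetric in the way claimed, so that proving one direction for the inverse relation reduces to the already-established direction rather than requiring a genuinely new argument. If the proof of Theorem \ref{main_theorem} treats the two directions asymmetrically (e.g. defines two different relations, one for $\preceq_{\mathcal{AS}}^{\varepsilon}$ and one for $\preceq_{\mathcal{S}}^{\varepsilon}$), then a little more care is needed: one would observe that when $R$ is a bisimulation these two relations are inverses of one another, which suffices for alternating bisimilarity.
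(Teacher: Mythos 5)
Your proposal is correct and takes essentially the same route as the paper: the paper's proof likewise applies the alternating-simulation half of the proof of Theorem \ref{main_theorem} to $R$ to obtain $S_{*}(\widetilde\Sigma)\preceq_{\mathcal{AS}}^{\varepsilon}S(\widetilde\Sigma)$ via the lifted relation $\widetilde{R}$, and then repeats the same argument with $R^{-1}$ to show that $\widetilde{R}^{-1}$ is an alternating $\varepsilon$-approximate simulation from $S(\widetilde\Sigma)$ to $S_{*}(\widetilde\Sigma)$, exactly the symmetry you identify in steps (3)--(4). The only cosmetic difference is your detour through the ordinary-simulation hypothesis $S_\tau(\Sigma)\preceq_{\mathcal{S}}^{\varepsilon}S_\params(\Sigma)$ (justified via determinism of $S_\tau(\Sigma)$), which the paper avoids by reusing only the alternating half of the construction rather than invoking Theorem \ref{main_theorem} as a black box.
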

The proof is provided in the Appendix.

\begin{remark}
As discussed earlier, one of the main advantages of the results proposed here in comparison with the ones in \cite{pola4,pola5} is that one can construct symbolic models for NCS using symbolic models obtained exclusively for the plant. Therefore, one can readily extend the proposed results to other classes of control systems for the plants, e.g. stochastic control systems, as long as there exist techniques to construct the corresponding symbolic models. For example, one can leverage the recently developed results in \cite{majid8}, \cite{majid10,ZTA1} (not requiring state-space gridding), and \cite{majid7} to construct symbolic models for classes of stochastic plants embedded in NCS.
\end{remark}

\subsection{Limited bandwidth}\label{band}
Assume that an abstraction $S_\params(\Sigma)$ exists such that $S_\params(\Sigma)\preceq_{\mathcal{AS}}^{\varepsilon}S_\tau(\Sigma)$ equipped with the alternating $\varepsilon$-approximate simulation relation $R$. From the formal definition of symbolic controllers in \cite{paulo} constructed based on $S_\params(\Sigma)$, one can readily verify the implicit presence of a static set-valued map (a.k.a quantizer map) $\varphi:X_\tau\rightarrow 2^{X_\params}$ inside the symbolic controllers, associating to each $x_\tau\in X_\tau$ a set of symbols in ${X_\params}$ as the following: $$\varphi(x_\tau)=\left\{x_\params\in X_\params\,\,|\,\,(x_\params,x_\tau)\in R\right\}.$$ Since the map $\varphi$ is static, one can shift this map towards the sensor in the NCS, as shown in Figure \ref{fig3}, without affecting any of the presented results. This means that in general a set of symbols, rather than only a \emph{quantized} one, needs to be sent over the sensor-to-controller branch of the network. Let us provide a simple example illustrating the problem that may raise if only one of the multiple possible symbols is sent instead of all of them. 

\begin{figure}
\centering
\begin{tabular}{c}
\includegraphics[width=8.5cm]{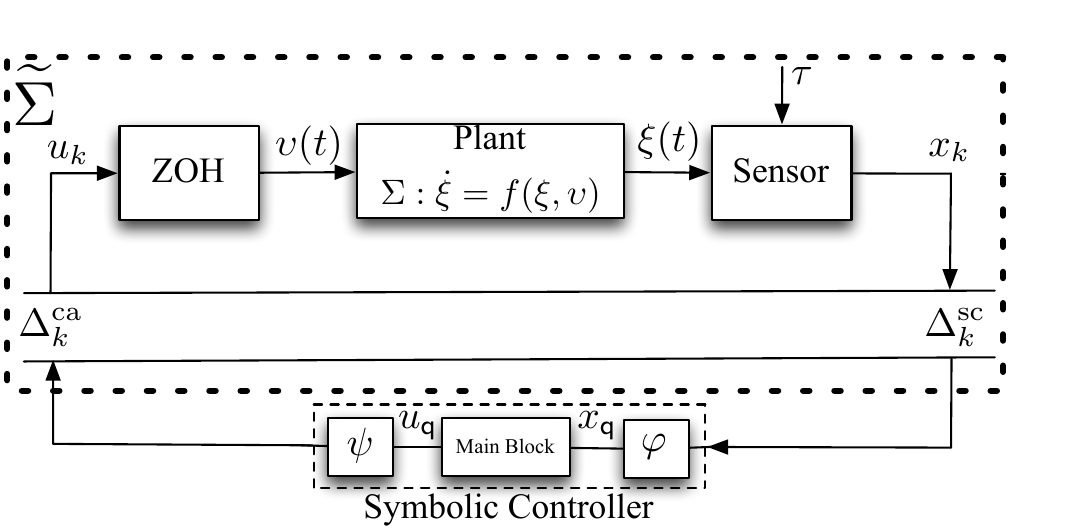}\\
\includegraphics[width=8.5cm]{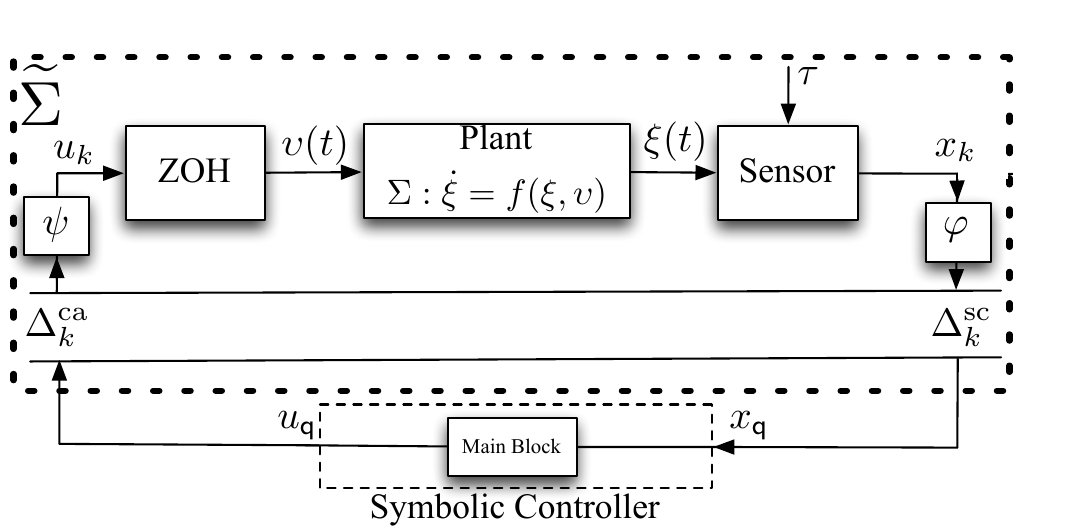}
\end{tabular}
\caption{Shifting maps $\varphi$ and $\psi$ for the symbolic controller to the other side of the communication network.}\label{fig3}
\end{figure}

\begin{example}
Consider the pair of finite systems in Figure \ref{fig33}, where the initial states are shown as targets of sourceless arrows and the lower part of the states are labeled with their output values. One can readily verify that $R=\left\{(\ol{x}_1,x_1),(\ol{x}_2,x_2),(\ol{x}_3,x_2)\right\}$ is an alternating 0-approximate simulation relation from $\ol{S}$ to $S$.
Therefore, $\varphi(x_1)=\{\ol{x}_1\}$ and $\varphi(x_2)=\{\ol{x}_2,\ol{x}_3\}$ is the associated ``quantization" map resulting from the relation $R$. Let us consider the new quantization map $\widetilde\varphi$ providing only one state of $\ol{S}$ for each state of $S$: $\widetilde\varphi(x_1)=\left\{\ol{x}_1\right\}$ and $\widetilde\varphi(x_2)=\{\ol{x}_3\}$. Consider the problem of synthesizing a controller enforcing the output of $\ol{S}$ to reach and stay at set $\{2\}$, namely a controller for the LTL specification $\Diamond \Box \{2\}$. There are infinitely many control sequences over $\ol{S}$ satisfying $\Diamond \Box \{2\}$, e.g. $\ol{u}_1\ol{u}_3\ol{u}_3\cdots$, $\ol{u}_2\ol{u}_2\ol{u}_1\ol{u}_3\ol{u}_3\cdots$, and $\ol{u}_2\ol{u}_2\ol{u}_2\ol{u}_2\ol{u}_1\ol{u}_3\ol{u}_3\cdots$. A possible ``static" controller enforcing the desired property could thus be obtained by restricting the set of inputs that the controller offers at each state of the abstracted plant, e.g. a map offering at $\bar{x}_1$ input $\bar{u}_1$, at $\bar{x}_3$ input $\bar{u}_2$, and at $\bar{x}_2$ input $\bar{u}_3$.  Using the new quantizer map $\widetilde\varphi$, and a controller consisting solely of the map in the previous sentence, however, does not allow us to distinguish between $\bar{x}_2$ and $\bar{x}_3$ and the refined control sequences over $S$ would result in $u_1u_2u_1u_2u_1u_2\cdots$. 
Such controller would result in the system satisfying infinitely often reaching $\{2\}$ on $S$, i.e. $\Box \Diamond  \{2\}$, rather than satisfying the requested specification $\Diamond \Box \{2\}$.  While this is a clearly concocted example for illustrative purposes, situations analogous to the one captured by this example arise in the construction of abstractions via notions of (alternating) approximate (bi)simulation (e.g.~\cite{majid}) in which some concrete states may be associated to several abstract states. For more details on this potential problem we refer the interested readers to~\cite{Matthias_Gunther}.  

\begin{figure}
\begin{center}
\begin{minipage}{0.4\linewidth}
\scalebox{0.8}{
\begin{tikzpicture}[shorten >=1pt,node distance=2.0cm,auto]
\tikzstyle{state}=[state with output]
\tikzstyle{every state}=[draw=brown!100,very thick,fill=brown!50,minimum size=1cm]
\node[state,initial,initial text=,initial where=above] (x1) {$\ol{x}_1$ \nodepart{lower} {$1$}};
\node[state] (x2) [right of=x1] {$\ol{x}_2$ \nodepart{lower} {$2$}};
\node[state] (x3) [below of=x1] {$\ol{x}_3$ \nodepart{lower} {$2$}};
\node[above left of=x1] {$\ol{S}:$};
\path[->] (x1) edge node {$\ol{u}_1$} (x2)
(x3) edge [bend left=35] node {$\ol{u}_2$} (x1)
(x1) edge [bend left=35] node {$\ol{u}_2$} (x3)
(x2) edge [loop right] node {$\ol{u}_3$} (x2);
\end{tikzpicture}
}
\end{minipage}\quad
\begin{minipage}{0.3\linewidth}
\vspace{-1.2cm}
\scalebox{0.8}{
\begin{tikzpicture}[shorten >=1pt,node distance=2.0cm,auto]
\tikzstyle{state}=[state with output]
\tikzstyle{every state}=[draw=brown!100,very thick,fill=brown!50,minimum size=1cm]
\node[state,initial,initial text=,initial where=above] (x1) {$x_1$ \nodepart{lower} {$1$}};
\node[state] (x2) [right of=x1] {$x_2$ \nodepart{lower} {$2$}};
\node[above left of=x1] {$S:$};
\path[->] (x1) edge [bend right=-35] node {$u_1$} (x2)
(x2) edge [bend left=35] node {$u_2$} (x1)
(x2) edge [loop right] node {$u_3$} (x2);
\end{tikzpicture}
}
\end{minipage}
\caption {Finite systems $\ol{S}$ and $S$.}\label{fig33}
\end{center}
\end{figure}
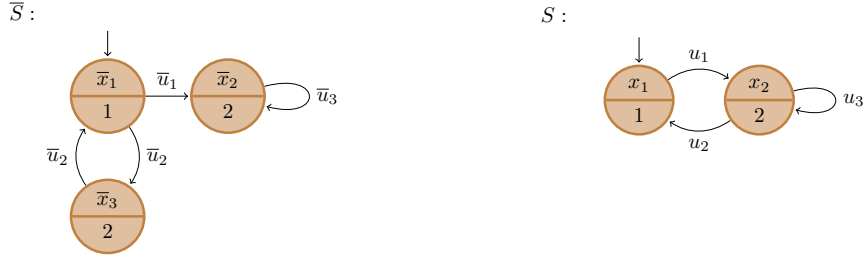
\end{example}

\begin{remark}\label{inform}
Unfortunately, the problem we just illustrated may arise in the constructions of \cite{pola4,pola5}. Based on the proposed symbolic abstractions in those works, the set-valued quantizer map $\varphi:\R^n\rightarrow 2^{[\R^n]_\eta}$ should be as follows: $$\varphi(x)=\left\{x_\params\in[\R^n]_\eta\,\,|\,\,\Vert x-x_\params\Vert\leq\varepsilon\right\},$$ for some given state-space quantization parameter $\eta\in\R^+$ and some precision $\varepsilon\in\R^+$, where $\eta<\varepsilon$; see \cite[equation (18)]{pola4} and \cite[equation (5)]{pola5}. However, \cite{pola4,pola5} use the map $\widetilde\varphi:x\rightarrow[x]_\eta$, where $[x]_\eta\in\left[\R^n\right]_\eta$ associates to every $x\in \R^n$ just one quantized state $[x]_\eta\in [\R^n]_\eta$, such that $\left\Vert x-[x]_\eta\right\Vert\leq\eta/2$. For the case of deterministic quantizers (no measurement error), this problem can be readily avoided if the proposed alternating approximate simulation relations in those papers were directly defined over quantized states as proposed in \cite{girard6}. For the case of nondeterministic quantizers, either one should send a set of symbols to the controllers, as discussed in the beginning of Subsection \ref{band}, or one should resort to feedback refinement relations \cite{Matthias_Gunther} (cf. Remark \ref{feed_refine}) and send only one symbol to the controller.
\end{remark}


Similarly, a quantization map $\ol{\psi}:X_\params\times X_\tau \times U_\params\rightarrow \mathsf{U}$ is implicitly contained in the symbolic controllers, associating to each symbol $u_\params\in U_\params(x_\params)$ generated by the controller an input $u\in U_\tau(x_\tau)$ for some $(x_\params,x_\tau)\in R$. Unfortunately, the quantization map $\ol\psi$ requires the knowledge of the state of the plant just before the controller. Therefore, one cannot easily shift this map towards the actuator (ZOH) in the NCS scheme. In order to solve this issue, one can simply assume that the set $\mathsf{U}$ is finite and $U_\params=\mathsf{U}$ and 
adjust condition (iii) in Definition \ref{AASR} as: 
\begin{itemize}
\item[(iii)] for every $(x_\params,x_\tau)\in R$, every $u_\params\in U_\params\left(x_\params\right)$, and every $x'_\tau\in\textbf{Post}_{u_\params}(x_\tau)$ there exists $x'_\params\in\mathbf{Post}_{u_\params}(x_\params)$ satisfying $(x'_\params,x'_\tau)\in R$,
\end{itemize}  
so that only abstractions $S_\params(\Sigma)$ satisfying $S_\params(\Sigma)\preceq_{\mathcal{AS}}^{\varepsilon}S_\tau(\Sigma)$ with the new condition (iii) are admitted in our scheme.
These modifications simply imply that for each symbolic input $u_\params$ in $S_\params(\Sigma)$ one should apply the same input to $S_\tau(\Sigma)$. Note that we abused notation by identifying $u_\params$ with the constant input curve with domain $[0,\tau[$ and value $u_\params$. With this adjustments, one has a new quantizer map 
$\psi=1_{U_\params}$, which is static and can be shifted towards the actuator (ZOH) in the NCS, as shown in Figure \ref{fig3}. Note that the proposed abstractions in \cite{girard2,majid,majid10,ZTA1,majid7,majid8} satisfy this new condition in Definition \ref{AASR} by simply taking $U_\params=\mathsf{U}$ in those results. In general this is a rather natural assumption to be taken as in practice one usually considers a finite set of inputs available and constructs abstractions accordingly. We emphasize that the results in Theorem \ref{main_theorem} and Corollary \ref{main_theorem1} still hold with this modification on condition (iii) in Definition \ref{AASR}.

\begin{remark}\label{feed_refine}
Observe that one can use the recently developed notion of feedback refinement relations introduced in \cite{Matthias_Gunther} in order to establish the relation between the concrete systems and their symbolic models. This new relation resolves both issues explained in the previous paragraphs: i) the refined controller only requires the quantized state information of the concrete
system; ii) the abstraction does not need to be used as a building block inside the refined controller
and, consequently, a smaller amount of memory is required. We refer the interested readers to \cite{KRZ} showing that the proposed map $\mathcal{L}$ in \eqref{map} also preserves the feedback refinement relations and that similar results as in Theorem \ref{main_theorem} hold for this new relation as well.
\end{remark}

\section{Symbolic Controller Synthesis and Refinements}\label{synthesis}
\subsection{Symbolic controller synthesis}\label{synthesis1}
Although the main contribution of the paper is on the construction of symbolic models for NCS with some non-idealities, the provided abstractions are amenable to any off the shelf symbolic controller synthesis toolbox such as \texttt{SCOTS} \cite{scots} and \texttt{Slugs} \cite{slugs}. To further elaborate on this, let us consider the following example. Let $A\subset\mathbb{R}^n$ be a compact set. Consider a safety problem, formulated as the satisfaction of the LTL formula\footnote{We refer the interested readers to \cite{katoen08} for the formal semantics of the temporal formula $\Box \varphi_A$ expressing the safety property over the set $A$.} $\Box \varphi_A$, where $\varphi_A$ is a label (or atomic proposition) characterizing the set $A$. The goal is to synthesize a controller enforcing $\Box \varphi_A$ over the output of the plant, available at the sensors before the network. To do so, we first construct a discrete controller enforcing $\Box \varphi_A$ over the output of $S_*(\widetilde\Sigma)=(X_*,X_{*0},U_*,\rTo_*,Y_*,H_*)$. Whenever $Y_*\neq X_*$ and $H_*\neq 1_{X_*}$, it suffices to consider a new safe set $\widehat A\subseteq X_*$ defined by $\widehat A=\{\mathsf{x}_*\in X_*\,\,|\,\,H_*(\mathsf{x}_*)\in A\}$. Now, one can apply Theorem 6.6 in \cite{paulo} to auxiliary system $\widehat S_*(\widetilde\Sigma)=(X_*,X_{*0},U_*,\rTo_*,X_*,1_{X_*})$ and the specification set $\widehat A$ to synthesize a discrete controller enforcing $\Box \varphi_A$ over the output of $S_*(\widetilde\Sigma)$. The main subtlety here is in the refinement of the constructed discrete controller enforcing $\Box \varphi_A$ over the output of the plant which requires the whole state tuple $\mathsf{x}_*$ of $S_*(\widetilde\Sigma)$ while only one of the elements of the tuple is available based on the packet arrived before the controller. We elaborate on the refinement of symbolic controllers in the next subsection and propose a class of NCS in which the whole state tuple $\mathsf{x}_*$ of $S_*(\widetilde\Sigma)$ can be recovered inside the controllers.

\subsection{Symbolic controller refinement}\label{refine}
In order to refine the synthesized symbolic controllers in our setup, we target a class of NCS where the upper and lower bounds of the delays are equal at each channel. This implies that all packets suffer the same delay (i.e. $\widetilde N_k = N^{\text{sc}}_{\min} = N^{\text{sc}}_{\max}$ and $\widehat N_k = N^{\text{ca}}_{\min} = N^{\text{ca}}_{\max}$ for any $k\in\N_0$) in each channel. This can be readily achieved by performing extra prolongation (if needed) of the delays suffered already by the packets. For the sensor-to-controller channel, this can be readily done inside the controller. The controller needs to have a buffer to hold arriving packets and keep them in the buffer until their delays reach the maximum. For the controller-to-actuator channel, the same needs to be implemented inside the ZOH. Therefore, in this setting, state (resp. input) packets are allowed to have any delay (not necessarily integer multiples of the sampling time) between 0 and $ N^{\text{sc}}_{\max}$ (resp. $ N^{\text{ca}}_{\max}$) where $ N^{\text{sc}}_{\max}$ and $ N^{\text{ca}}_{\max}$ are integer multiples of the sampling time.

In this special class of NCS, the information contained in the NCS $\widetilde\Sigma$ is captured by the metric system $S'(\widetilde \Sigma):=\mathcal{L}(S_\tau(\Sigma), N^{\text{sc}}_{\max},N^{\text{sc}}_{\max},N^{\text{ca}}_{\max},N^{\text{ca}}_{\max})$. We also denote by $S_*'(\widetilde \Sigma):=\mathcal{L}(S_\params(\Sigma), N^{\text{sc}}_{\max},N^{\text{sc}}_{\max},N^{\text{ca}}_{\max},N^{\text{ca}}_{\max})$ the corresponding symbolic model of $S'(\widetilde \Sigma)$. Recall that $S_*(\widetilde \Sigma)$ denotes the symbolic model of NCS without the prolongation of delays suffered by packets in both channels of the network. Here, we provide a brief comparison between $S_*'(\widetilde \Sigma)$ and $S_*(\widetilde \Sigma)$: 1) $S_*'(\widetilde \Sigma)$ has no non-determinism caused by different delay possibilities in comparison with $S_*(\widetilde \Sigma)$. This results in a smaller transition relation making the controller synthesis less complex; 2)  $S_*'(\widetilde \Sigma)$ is less conservative in comparison with $S_*(\widetilde \Sigma)$ in terms of the existence of symbolic controllers satisfying some given logic specifications. We elaborate more on this in a lemma later;  3) In terms of actual implementation, the controllers designed for $S'(\widetilde \Sigma)$ may be more complex than those for $S(\widetilde \Sigma)$ because they need to have a buffer to hold arriving packets till they reach the required maximum delay, the same needs to be implemented for the ZOH. 

\begin{lemma}\label{less}
Consider a symbolic model $S_a$ and $\widetilde N_{\min},\widetilde N_{\max},\widehat N_{\min},\widehat N_{\max}\in\N_0$, where $\widetilde N_{\min}\leq\widetilde N_{\max}$ and $\widehat N_{\min}\leq\widehat N_{\max}$. We have \mbox{$S_{*}\preceq_{\mathcal{AS}}^{0}S'_{*}$}, where $S_{*}:=\mathcal{L}(S_a, \widetilde N_{\min},\widetilde N_{\max},\widehat N_{\min},\widehat N_{\max})$ and $S'_{*}:=\mathcal{L}(S_a, \widetilde N_{\max},\widetilde N_{\max},\\\widehat N_{\max},\widehat N_{\max})$.
\end{lemma}
The proof is provided in the Appendix. The result in Lemma \ref{less} implies that if there exists a symbolic controller enforcing some complex specifications over $S_{*}$, then there exists a symbolic controller enforcing the same complex specifications over $S'_{*}$ which confirms item 2) in the above comparison between $S_*'(\widetilde \Sigma)$ and $S_*(\widetilde \Sigma)$.

Finally, in order to refine the constructed symbolic controllers in closed loop fashion, one needs to have the symbolic state tuple of the form: $$(x_{*1},\ldots,x_{*N^{\text{sc}}_{\max}},u_1,\ldots,u_{N^{\text{ca}}_{\max}},N^{\text{sc}}_{\max},\ldots,N^{\text{sc}}_{\max},N^{\text{ca}}_{\max},\ldots,N^{\text{ca}}_{\max}).$$The controller already knows what control-inputs has generated during the $N_{\max}:=N^{\text{ca}}_{\max}+N^{\text{sc}}_{\max}-1$ previous sampling times (i.e. $u_1,\ldots,u_{N_{\max}}$). Hence, it just needs to store them in a buffer. The first $N^{\text{ca}}_{\max}$ control inputs (i.e. $u_1,\ldots,u_{N^{\text{ca}}_{\max}}$) will be used directly in the symbolic state tuple and the rest for the construction of states $x_{*1},\ldots,x_{*(N^{\text{sc}}_{\max}-1)}$. Now consider two different cases. Case 1: we assume that the symbolic model of the plant (i.e. $S_\params(\Sigma)$) is deterministic (cf. the example section). The controller gets states $x_{*N^{\text{sc}}_{\max}}$ using the current measurement packet (i.e. $x_{N^{\text{sc}}_{\max}}$) and the relation between $S_\tau(\Sigma)$ and $S_\params(\Sigma)$. Using $x_{*N^{\text{sc}}_{\max}}$, previously generated control-inputs (i.e. $u_{N^{\text{ca}}_{\max}+1},\ldots,u_{N^{\text{ca}}_{\max}+N^{\text{sc}}_{\max}-1}$), and symbolic model $S_\params(\Sigma)$, the controller can construct other symbolic state information as the following: $x_{*1} =\textbf{Post}_{u_{N^{\text{ca}}_{\max}+1}}(x_{*2})$, $x_{*2} =\textbf{Post}_{u_{N^{\text{ca}}_{\max}+2}}(x_{*3})$, $\ldots$, and $x_{*(N^{\text{sc}}_{\max}-1)} =\textbf{Post}_{u_{N^{\text{ca}}_{\max}+N^{\text{sc}}_{\max}-1}}(x_{*N^{\text{sc}}_{\max}})$. Case 2: we assume that the controller has access to the current state measurement of the plant (i.e. $x_{N^{\text{sc}}_{\max}}$) and the model of the plant. Here, the controller can construct all the state measurements still traveling inside the sensor-to-controller channel up to the current state of the plant (i.e. $x_1,\ldots,x_{N^{\text{sc}}_{\max}-1}$) using the current packet it receives (i.e. $x_{N^{\text{sc}}_{\max}}$), previously generated control-inputs (i.e. $u_{N^{\text{ca}}_{\max}+1},\ldots,u_{N^{\text{ca}}_{\max}+N^{\text{sc}}_{\max}-1}$), and the model of the plant: $x_1 =\xi_{x_2u_{N^{\text{ca}}_{\max}+1}}(\tau)$, $x_2 =\xi_{x_3u_{N^{\text{ca}}_{\max}+2}}(\tau)$, $\ldots$, and $x_{N^{\text{sc}}_{\max}-1} =\xi_{x_{N^{\text{sc}}_{\max}}u_{N^{\text{ca}}_{\max}+N^{\text{sc}}_{\max}-1}}(\tau)$  (solving the differential equation, possibly numerically, online). Therefore, using the relation between $S_\tau(\Sigma)$ and $S_\params(\Sigma)$ and $x_{1},\ldots,x_{N^{\text{sc}}_{\max}}$, symbolic states $x_{*1},\ldots,x_{*N^{\text{sc}}_{\max}}$ are constructed inside the controller.

\begin{remark}
One can use a quantized version of $x_{N^{\text{sc}}_{\max}}$ rather than itself in Case 2 above to construct symbolic states $x_{*1},\ldots,x_{*N^{\text{sc}}_{\max}}$ of (not necessarily deterministic) $S_\params(\Sigma)$ inside the controller. We can use a quantizer with appropriately chosen precision based on the abstraction precision $\varepsilon$, the Lipschitz constant $Z$ in Definition \ref{Def_control_sys}, and the proposed techniques in \cite[Subsection VI-B]{Matthias_Gunther} to construct symbolic states $x_{*1},\ldots,x_{*(N^{\text{sc}}_{\max}-1)}$ using the model of the plant. On the other hand, one can try to synthesize symbolic controllers with partial information (see e.g. \cite{chatterjee}) $(x_{*N^{\text{sc}}_{\max}},u_1,\ldots,u_{N^{\text{ca}}_{\max}},N^{\text{sc}}_{\max},\ldots,N^{\text{sc}}_{\max},N^{\text{ca}}_{\max},\ldots,N^{\text{ca}}_{\max})$ which is left as object of future research. Remark that the computational complexity of synthesis with partial information is usually much larger than the synthesis with full state information \cite{chatterjee}. Therefore, there is a trade off between having simpler controller synthesis scheme (cf. Subsection \ref{synthesis1}) amenable to any off the shelf synthesis toolbox but more complex refinement scheme (cf. Subsection \ref{refine}) or having more complex controller synthesis scheme (see e.g. \cite{chatterjee}) not necessarily tractable using off the shelf synthesis toolbox but simpler refinement procedure.
\end{remark}

\section{Space Complexity Analysis}

We compare the results provided here with those in \cite{pola4,pola5} in terms of the size of the obtained symbolic models. For the sake of a fair comparison, assume that we use also a grid-based symbolic abstraction for the plant $\Sigma$ using the same sampling time and quantization parameters as the ones in \cite{pola4,pola5}. Note that the provided comparison may not be complete still, because we do not need any requirement on the symbolic controller while in \cite{pola4,pola5} it is assumed that the symbolic controllers are static. By assuming that we are only interested in the dynamics of $\Sigma$ on a compact set $\mathsf{D}\subset\R^n$, the cardinality of the set of states of the symbolic models  provided in \cite{pola4,pola5}, is: 
\begin{align}\notag
\left\vert X_\star\right\vert=\displaystyle\sum_{i\in\left\{\{1\}\cup[N_{\min};N_{\max}]\right\}}\left\vert\left[\mathsf{D}\right]_{\eta}\right\vert^i,
\end{align}
where $N_{\min}=N^{\text{sc}}_{\min}+N^{\text{ca}}_{\min}$, $N_{\max}=N^{\text{sc}}_{\max}+N^{\text{ca}}_{\max}$, and $\left[\mathsf{D}\right]_{\eta}=\mathsf{D}\cap\left[\R^n\right]_\eta$ for some quantization parameters $\eta\in\R^+$.

Meanwhile, the size of the set of states for the abstractions provided by Theorem \ref{main_theorem} and Corollary \ref{main_theorem1}, is at most: 
\begin{align}\notag
\left\vert X_*\right\vert=&\left(\left\vert\left[\mathsf{D}\right]_{\eta}\right\vert+1\right)^{N^{\text{sc}}_{\max}}\cdot\left\vert\left[\mathsf{U}\right]_\mu\right\vert^{N^{\text{ca}}_{\max}}\cdot\left(N^{\text{sc}}_{\max}-N^{\text{sc}}_{\min}+1\right)^{N^{\text{sc}}_{\max}}\cdot\left(N^{\text{ca}}_{\max}-N^{\text{ca}}_{\min}+1\right)^{N^{\text{ca}}_{\max}},
\end{align}
where $\left[\mathsf{U}\right]_\mu=\mathsf{U}\cap\left[\R^m\right]_\mu$ for some quantization parameters $\mu\in\R^+$. 
Note that there may exist some states of $X_*$ that are not reachable from any of the initial states $x_{*0}\in X_{*0}$ due to the combination of the delays in both channels of the network and, hence, one can exclude them from the set of states $X_*$ without loss of generality. Therefore, the actual size of the state set $X_*$ may be less than the aforementioned computed ones.

One can easily verify that the size of the symbolic models proposed in \cite{pola4,pola5} is at most:
\begin{align}\label{size}
\left\vert S_\star(\widetilde\Sigma)\right\vert=&\left\vert X_\star\right\vert\cdot\left\vert[\mathsf{U}]_\mu\right\vert\cdot\left(N_{\max}-N_{\min}+1\right)\cdot{K}\\\notag=&\Big(\displaystyle\sum_{i\in\left\{\{1\}\cup[N_{\min};N_{\max}]\right\}}\left\vert\left[\mathsf{D}\right]_{\eta}\right\vert^i\Big)\cdot\left\vert[\mathsf{U}]_\mu\right\vert\cdot\left(N_{\max}-N_{\min}+1\right)\cdot{K},
\end{align}
where $K$ is the maximum number of $u$-successors of any state of the symbolic model $S_\params(\Sigma)$ for $u\in[\mathsf{U}]_\mu$. Note that with the results proposed in \cite{girard2} one has $K=1$ because $S_\params(\Sigma)$ is a deterministic system, while with the ones proposed in \cite{majid} one has $K\geq1$ because $S_\params(\Sigma)$ is a nondeterministic system and the value of $K$ depends on the functions $\beta$ and $\gamma$ in \eqref{delta_FC} -- see \cite{majid} for more details. The size of the symbolic models provided in this paper is at most:
\begin{align}\label{size1}
\left\vert S_*(\widetilde\Sigma)\right\vert&=\left\vert X_*\right\vert\cdot\left\vert[\mathsf{U}]_\mu\right\vert\cdot\left(N^{\text{sc}}_{\max}-N^{\text{sc}}_{\min}+1\right)\cdot\left(N^{\text{ca}}_{\max}-N^{\text{ca}}_{\min}+1\right).K\\\notag&=\left(\left\vert\left[\mathsf{D}\right]_{\eta}\right\vert+1\right)^{N^{\text{sc}}_{\max}}\cdot\left\vert\left[\mathsf{U}\right]_\mu\right\vert^{N^{\text{ca}}_{\max}+1}\cdot\left(N^{\text{sc}}_{\max}-N^{\text{sc}}_{\min}+1\right)^{N^{\text{sc}}_{\max}+1}\cdot\left(N^{\text{ca}}_{\max}-N^{\text{ca}}_{\min}+1\right)^{N^{\text{ca}}_{\max}+1}\cdot{K},
\end{align}
with the same $K$ as in \eqref{size}.
The symbolic model $S_*(\widetilde\Sigma)$ can have a smaller size for some large values of $N_{\max}$ and for $\left\vert\left[\mathsf{D}\right]_{\eta}\right\vert>>\left\vert\left[\mathsf{U}\right]_\mu\right\vert$, as depicted in Figure \ref{fig4} (upper panel) by fixing $N^{\text{ca}}_{\max}=N^{\text{sc}}_{\max}=6$ and $N^{\text{ca}}_{\min}=N^{\text{sc}}_{\min}=1$. On the other hand, the symbolic model $S_\star(\widetilde\Sigma)$ can have a smaller size for some large values of $\left\vert\left[\mathsf{U}\right]_\mu\right\vert$ and of $N^{\text{ca}}_{\max}-N^{\text{ca}}_{\min}$ (or $N^{\text{sc}}_{\max}-N^{\text{sc}}_{\min}$), as depicted in Figure \ref{fig4} (lower panel) by fixing $\vert\left[\mathsf{D}\right]_{\eta}\vert=10^7$.

\begin{figure}
\begin{center}
\includegraphics[width=10.0cm]{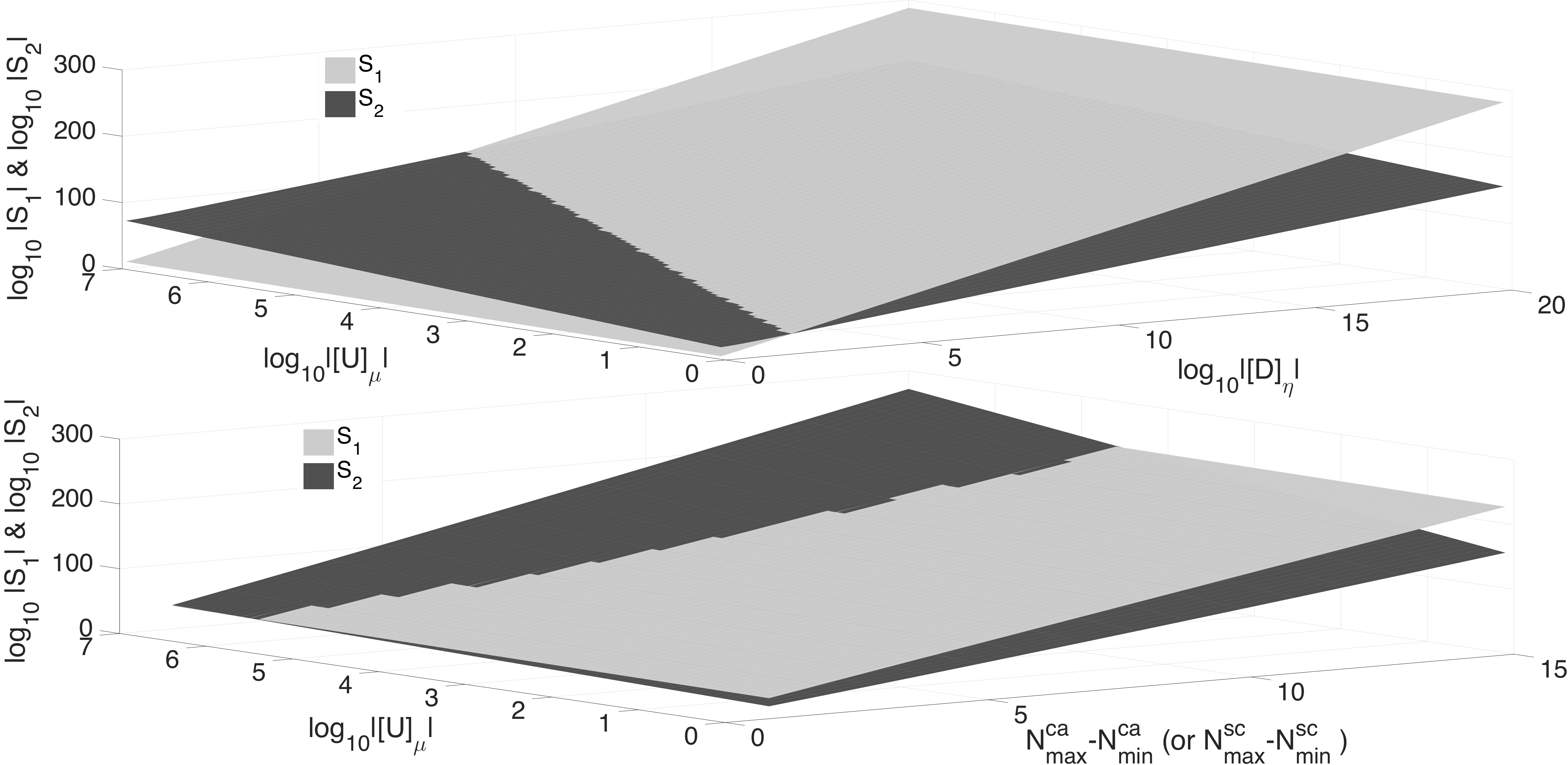}
\caption{Upper panel: sizes of $S_\star(\widetilde\Sigma)$ and $S_*(\widetilde\Sigma)$ for different values of $\vert\left[\mathsf{D}\right]_{\eta}\vert$ and $\vert\left[\mathsf{U}\right]_\mu\vert$, where $N^{\text{ca}}_{\max}=N^{\text{sc}}_{\max}=6$, $N^{\text{ca}}_{\min}=N^{\text{sc}}_{\min}=1$, and $S_1=S_\star(\widetilde\Sigma)$ and $S_2=S_*(\widetilde\Sigma)$. Lower panel: sizes of $S_\star(\widetilde\Sigma)$ and $S_*(\widetilde\Sigma)$ for different values of $\vert\left[\mathsf{U}\right]_\mu\vert$ and of $N^{\text{ca}}_{\max}-N^{\text{ca}}_{\min}$ (or $N^{\text{sc}}_{\max}-N^{\text{sc}}_{\min}$), where $\vert\left[\mathsf{D}\right]_{\eta}\vert=10^7$.}\label{fig4}
\end{center}
\end{figure}

Note that in the special case when $N^{\text{sc}}_{\max}=N^{\text{sc}}_{\min}=1$, the dummy symbol $q$ is not necessary in the definition of $X_*$, hence: 
\begin{align}
\label{num_state}
&\left\vert X_*\right\vert=\left\vert\left[\mathsf{D}\right]_{\eta}\right\vert\cdot\left\vert\left[\mathsf{U}\right]_\mu\right\vert^{N^{\text{ca}}_{\max}}\cdot\left(N^{\text{ca}}_{\max}-N^{\text{ca}}_{\min}+1\right)^{N^{\text{ca}}_{\max}}.
\end{align}
%

\begin{remark}
In \cite[Remark 5.2]{pola4} the authors suggest a more concise representation for their proposed finite abstractions of NCS, in order to reduce the space complexity. However, this representation is only applicable if the plant $\Sigma$ is $\delta$-ISS. Hence, for general classes of plants $\Sigma$ in the NCS, the approach proposed in this work can be more appropriate in terms of the size of the abstractions, particularly for large values of $N_{\max}$ and for $\left\vert\left[\mathsf{D}\right]_{\eta}\right\vert>>\left\vert\left[\mathsf{U}\right]_\mu\right\vert$.
\end{remark}

\begin{remark}
One can readily see in the example section that the computation time and memory required for computing symbolic abstractions of NCSs using the proposed method here are several orders of magnitude smaller than those required using techniques in \cite{pola4,pola5}. The main reason for this is because modular construction of abstractions as proposed in this paper is highly favored by the binary decision diagram (BDD) data structure which compactly represents both sets of states
and the transition relation between these states.
\end{remark}

\section{Example}\label{example} 
\begin{table*}[]	
	\small
	\centering
	\caption{Results for constructing symbolic models of NCS using symbolic models of their plants.}
	\label{TBL_MODEL_CONS_DETAILS}
	\scalebox{0.73}{
		\begin{tabular}{lll|llllllllll}
			\hline
			Case Study & $\vert S_\params(\Sigma)\vert$ &								& (2,2) 	  & (2,3)  	    & (2,4) 	  & (2,5)		& (3,2) 	  & (3,3)  	    & (3,4) 	  & (3,5) 	    & (4,2) 	  & (4,3) \\ \hline
			\textsf{SM} 	 & 26      		   & $\vert S_*(\widetilde\Sigma)\vert$ 	& 214   	  & 422    	    & 838   	  & 1670		& 430   	  & 846   	    & 1678  	  & 3342  	    & 862   	  & 1694  		\\
							 &         		   & Time (sec)	    						& $< 0.1$ 	  & $< 0.1$  	& $< 0.1$ 	  & $< 0.1$		& $< 0.1$ 	  & $< 0.1$	    & $< 0.1$ 	  & $< 0.1$     & $< 0.1$ 	  & $< 0.1$  	\\
							 &         		   & Memory (KB)    						& 1.9         & 2.9         & 1.2         & 1.2         & 3.5         & 1.6         & 1.6         & 1.6         & 1.9         & 1.9         \\ \cline{3-13}
			\textsf{DI} 	 & 2039    		   & $\vert S_*(\widetilde\Sigma)\vert$  	& 170272 	  & 681088 	    & 2.7$\tz{6}$ & 1.1$\tz{7}$ & 900384 	  & 3.6$\tz{6}$ & 1.4$\tz{7}$ & 5.8$\tz{7}$ & 4.8$\tz{6}$ & 1.9$\tz{7}$ \\
							 &         		   & Time (sec)	    						& $< 1$ 	  & $< 1$  	    & $< 1$ 	  & $< 1$		& $< 1$ 	  & $< 1$  	    & $< 1$ 	  & $< 1$ 	    & $< 1$ 	  & $< 1$  	    \\
							 &         		   & Memory (KB)    						& 2.0         & 2.4         & 3.1         & 2.9         & 3.0         & 2.9         & 3.1         & 3.2         & 5.2         & 4.3         \\ \cline{3-13}
			\textsf{Robot} 	 & 29280  		   & $\vert S_*(\widetilde\Sigma)\vert$  	& 6.4$\tz{7}$ & 1.0$\tz{9}$ & 1.6$\tz{10}$& 2.6$\tz{11}$& 5.6$\tz{8}$ & 9.0$\tz{9}$ & 3.4$\tz{11}$& 2.3$\tz{12}$& 4.9$\tz{9}$ & 7.8$\tz{10}$\\
							 &         		   & Time (sec)	    						& $< 1$ 	  & $< 1$  	    & $< 1$ 	  & $< 1$       & $< 1$       & $< 1$       & $< 1$       & $< 1$       & 1.4         & 1.6         \\
							 &         		   & Memory (KB)    						& 15          & 14          & 17          & 16          & 16          & 21          & 22          & 19          & 35          & 33          \\ \cline{3-13}							 
			\textsf{Vehicle1}& $9.1\times10^6$ & $\vert S_*(\widetilde\Sigma)\vert$  	& 2.4$\tz{12}$& 1.5$\tz{14}$& 9.7$\tz{15}$& 6.2$\tz{17}$& 1.5$\tz{14}$& 9.8$\tz{15}$& 6.3$\tz{17}$& 4.0$\tz{19}$& 1.0$\tz{16}$& 6.4$\tz{17}$\\
							 &         		   & Time (sec)	    						& 70 	  	  & 129  	 	& 89    	  & 107 		& 5587 	      & 944   	    & 1598 	      & 1705 	    & 7399  	  & 6182        \\
							 &         		   & Memory (KB) 							& 734         & 992         & 961         & 881         & 7577.6      & 5529.6      & 8294.4      & 7782.4      & 9932.8      & 9523.2  	\\ \cline{3-13}
			\textsf{Vehicle2}& $9.9\times10^6$ & $\vert S_*(\widetilde\Sigma)\vert$  	& 2.7$\tz{12}$& 1.7$\tz{14}$& 1.1$\tz{16}$& 7.0$\tz{17}$& 1.8$\tz{14}$& 1.2$\tz{16}$& 7.4$\tz{17}$& 4.6$\tz{19}$& 1.2$\tz{16}$& 7.91$\tz{17}$\\
							 &         		   & Time (sec)	    						& 66.8   	  & 104.6  		& 107   	  & 61.8		& 833   	  & 781  	    & 1247 	  	  & 4363 	    & 5137   	  & 8462	    \\
							 &         		   & Memory (KB) 							& 826         & 683         & 733         & 709         & 5222.4      & 4710.4      & 4608        & 11059,2     & 8396.8      & 8806.4      \\ \cline{3-13}
			\textsf{Vehicle3}& $1.89\times10^7$& $\vert S_*(\widetilde\Sigma)\vert$  	& 4.2$\tz{12}$& 2.7$\tz{14}$& 1.7$\tz{16}$& 1.1$\tz{18}$& 2.3$\tz{14}$& 1.4$\tz{16}$& 9.3$\tz{17}$& 5.9$\tz{19}$& 1.3$\tz{16}$& 7.94$\tz{17}$\\
							 &         		   & Time (sec)	    						& 273.3 	  & 285  		& 238.4 	  & 173 		& 22344 	  & 54919  	    & 27667 	  & 36467 	    & 39065 	  & 145390  	\\
							 &         		   & Memory (KB) 							& 1638.4      & 1945.6      & 1843.2      & 1945.6      & 23040       & 40652.8     & 30208       & 22425.6     & 21094.4     & 36556.3		\\
			\hline											 			 			
		\end{tabular}
	}
\end{table*}
In this section, we present some case studies where we construct symbolic models of NCS from the symbolic models of the plants inside them. We consider the setup presented in Section \ref{synthesis} in order to refine the constructed symbolic controllers in closed loop fashion. First, we present results for the construction of symbolic models of the NCS for several systems. Then, we provide an example where a dynamic controller is synthesized using the derived symbolic model of the NCS. The synthesized controller is simulated in closed loop fashion using both \texttt{MATLAB} and \texttt{OMNeT++} \cite{omnet++}. The computation
of the abstract systems $S'_*(\widetilde \Sigma)$ (cd. Subsection \ref{refine}) and the symbolic controllers have been implemented by
the software tool \texttt{SENSE} \cite{sense}.
\subsection{Symbolic models of NCS from the ones of plants in them}
We use the tool \texttt{SCOTS} \cite{scots} to construct symbolic models of the plants which are stored as BDD objects. The BDD objects are fed as inputs to the tool \texttt{SENSE} along with NCS delay bounds to construct symbolic models of NCS. Notice that the tool \texttt{SENSE} constructs symbolic models of NCS directly by operating with BDD objects of the symbolic models of the plants. This results in a large reduction in the computation time in comparison with constructing them from scratch which is the case using the techniques proposed in \cite{pola4,pola5} (cf. see later for a comparison for some of the case studies).
Table \ref{TBL_MODEL_CONS_DETAILS} summarizes the results for different network delay configurations. 
Six case studies are considered. For each case study, we show the size of the symbolic model of the plant. For different network delay configurations $(N_{\max}^{\text{sc}},N_{\max}^{\text{ca}})$, we show the size of the symbolic models of NCS, the time in seconds required to construct them, and the memory in KB used to store them.
First, we consider an already given symbolic model of the plant (denoted by \textsf{SM}) consisting of 13 states and 26 transitions. Then, we consider a plant as a double integrator (denoted by \textsf{DI}) inside an NCS where its dynamic is given by:
\begin{equation}
\nonumber
\Sigma:\left\{\dot{\xi} = 
\begin{bmatrix}
0 & 1\\
0 & 0\\
\end{bmatrix}
\xi
+ 
\begin{bmatrix}
0\\
1\\
\end{bmatrix}
\upsilon,\right.
\end{equation}
with the set of states restricted to $[0,3.2]\times[-1.5,1.5]$, state quantization parameters as $(0.2, 0.3)$, input set restricted to $[-0.3,0.3]$, input quantization parameter of $0.2$, and sampling time $\tau=0.3$. 
The third case study, denoted by \textsf{Robot}, correspond to a mobile robot whose dynamics is given by \cite{belta}:
\begin{equation*}
\Sigma\left\{\dot\xi 
= 
\begin{bmatrix}
\upsilon_1\\
\upsilon_1\\
\end{bmatrix}.\right.
\end{equation*}
The states represent the position of the robot. We consider the state set restricted to $[0,63]\times[0,63]$ and state quantization parameter as $1$. The input set is restricted to $[-1,1]\times[-1,1]$ with input quantization parameter of $1$, and sampling time is $\tau = 1$.
The last three case studies, denoted by \textsf{Vehicle1}, \textsf{Vehicle2}, \textsf{Vehicle3}, respectively, correspond to a vehicle whose dynamics is given by:
\begin{equation}
\label{EQN_VEHICLE}
\Sigma\left\{\dot\xi 
= 
\begin{bmatrix}
\upsilon_1 \cos(\alpha + \xi_3) \cos(\alpha)^{-1} \\
\upsilon_1 \sin(\alpha + \xi_3) \cos(\alpha)^{-1}\\
\upsilon_1 \tan(\upsilon_2)\\
\end{bmatrix},\right.
\end{equation}
where $\alpha = \arctan(\tan(\upsilon_2)/2)$.
The first and second states represent the position of the vehicle while the third represents the heading angle. The control inputs represent rear wheel velocity and the steering angle. We consider state quantization parameter as $0.2$, input set restricted to $[-1,1]\times[-1,1]$, and input set quantization parameter as $0.3$, and a sampling time of $\tau = 0.3$ for the last three case studies.
We consider the state set restricted to $[0,6]\times[0,5]\times[-3.54,3.54]$ for \textsf{Vehicle1}, \textsf{Vehicle2} case studies. The state set is restricted to $[0,10]\times[0,10]\times[-3.54,3.54]$ for the \textsf{Vehicle3} case study. 
Some parts of the state sets of the last four case studies were removed to represent obstacles that need to be avoided when synthesizing the symbolic controllers.
The symbolic models were constructed using a PC (Intel Core i7 3.6 GHz and 32 GB RAM). The CUDD library \cite{cudd} was used to operate with BDDs. Note that the inconsistencies in the execution time and storage memory reported in Table \ref{TBL_MODEL_CONS_DETAILS} are due to the heuristic algorithms implemented in the CUDD library for operating with BDDs to automatically reorder binary variables for optimizing BDD operations. 
We also implemented the construction of symbolic models of NCS using the schemes proposed in \cite{pola4,pola5}. The computation time and memory storage for the construction of symbolic model for NCS containing \textsf{DI} with delay parameters $N_{\max}^{\text{sc}}=N_{\max}^{\text{ca}}=2$ amounted to $1.17$ seconds and $42.6$ KB respectively. For the \textsf{Vehicle1} case with delay parameters $N_{\max}^{\text{sc}}=N_{\max}^{\text{ca}}=2$, the computation time amounted to more than two days and the memory usage exceeded 32 GB. This shows that the computation times and memory required to construct symbolic models using the schemes in \cite{pola4,pola5} are several orders of magnitude more than those using the proposed scheme in this paper which amounted to $0.019$ and $117.4$ seconds, respectively (including the computation time required by the tool \texttt{SCOTS} to construct the symbolic models of the plants inside the NCS), while the storage memory is already reported in table \ref{TBL_MODEL_CONS_DETAILS}.
\subsection{Controller synthesis and refinement: the Robot case}
We consider the third case study from Table \ref{TBL_MODEL_CONS_DETAILS} with the network delays $(N_{\max}^{\text{sc}} = 2, N_{\max}^{\text{ca}} = 2)$. The control objective is to enforce the robot to infinitely-often visit two target sets of states described by propositions $\textsf{Target}1$ and $\textsf{Target}1$ which are defined by the hyper-intervals $[5,15]\times[45,55]$ and $[45,55]\times[5,15]$, respectively. Moreover, the robot needs to avoid a set of nine obstacles defined by the propositions $\text{Obstacle}_i$, $i\in\{1,\ldots,9\},$ which are defined by the hyper-intervals $[5,15]\times[20, 22]$, $[15,17]\times[5,22]$, $[48,50]\times[45,60]$, $[51,58]\times[45,47]$, $[27,36]\times[20,45]$, $[44,49]\times[27,36]$, $[27,36]\times[52,57]$, $[27,36]\times[5,10]$, and $[14,19]\times[27,36]$, respectively. This control objective can be described by the following LTL formula:
\begin{equation}
\nonumber
\psi = \Big( \underset{i=1}{\overset{9}{\bigwedge}} \square(\lnot \textsf{Obstacle}_i)\Big) \wedge \square\Diamond(\textsf{Target}1) \wedge \square\Diamond(\textsf{Target}2). 
\end{equation}
The controller was synthesized using fixed-point computations as implemented in \texttt{SENSE}. Remark that the resulting controller is a dynamic controller with two discrete states. 
The computation of the symbolic controller amounted to $4.7$ seconds. Figure \ref{FIG_EX_SIM_MATLAB_ROBOT} shows the closed loop simulation of the NCS. 
\begin{figure}
	\centering
	\includegraphics[width=0.55\textwidth]{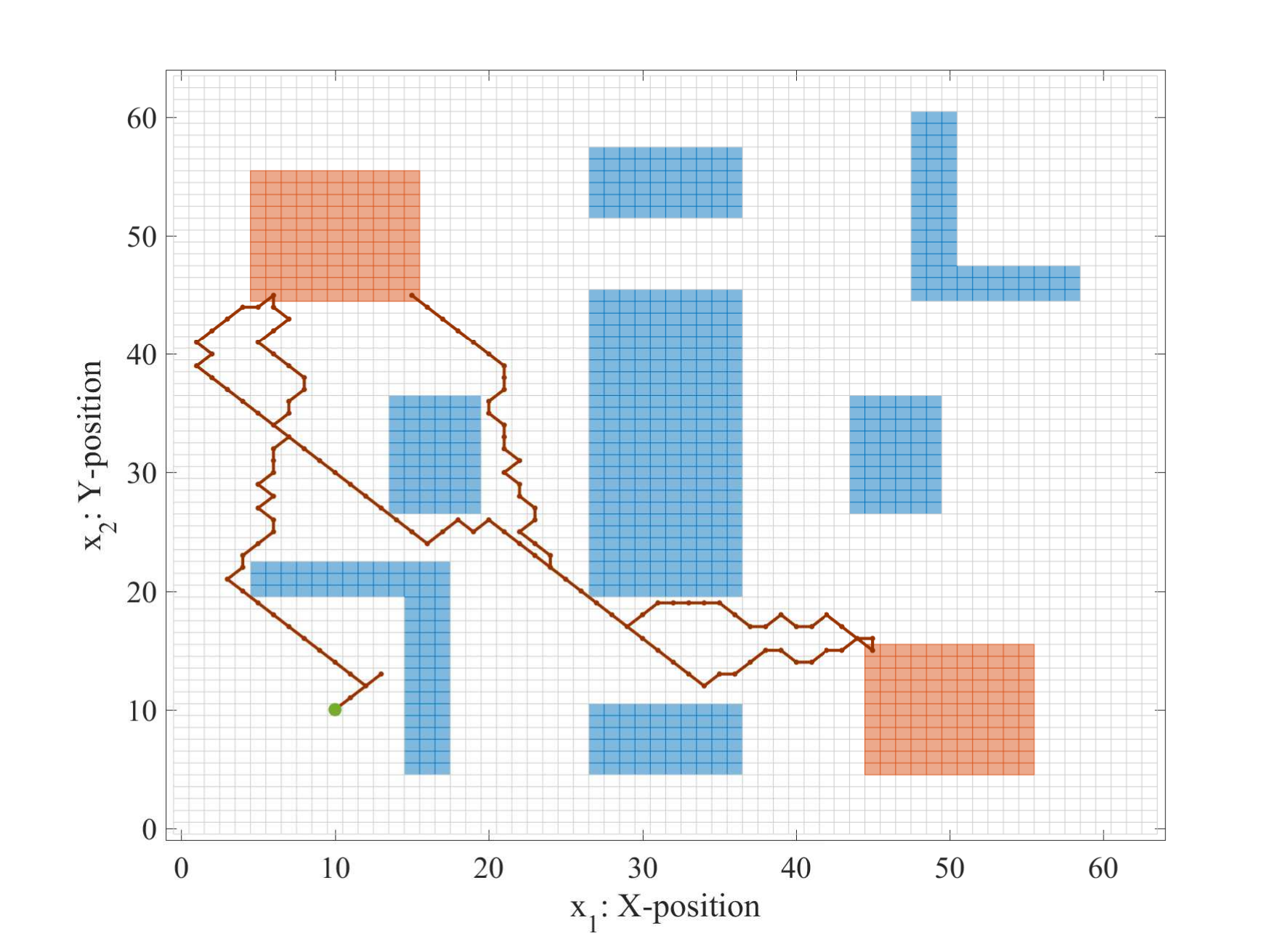}
	\caption{Closed loop simulation of the NCS with the robot system in \texttt{MATLAB}. The target sets are indicated with the red boxes and obstacles with the blue boxes.}
	\label{FIG_EX_SIM_MATLAB_ROBOT}
\end{figure}
For a more realistic simulation environment, we consider \texttt{OMNeT++} \cite{omnet++}, a common simulation framework for networks. Communication channels are modeled using a random propagation-delay communication channels in \texttt{OMNeT++}. Figure \ref{FIG_EX_SIM_OMNET_ROBOT} shows the closed loop simulation results in \texttt{OMNeT++}. We make use of the animation capabilities of \texttt{OMNeT++} to visualize both packet transfers over the network as well as the movement of the robot through the state set as illustrated in \cite{sense}. Controller synthesis and refinement for the vehicle dynamic in \eqref{EQN_VEHICLE}, for a configuration of network delays, and for an LTL specification are provided in \cite{sense}.
\begin{figure}
	\centering
	\includegraphics[width=0.55\textwidth]{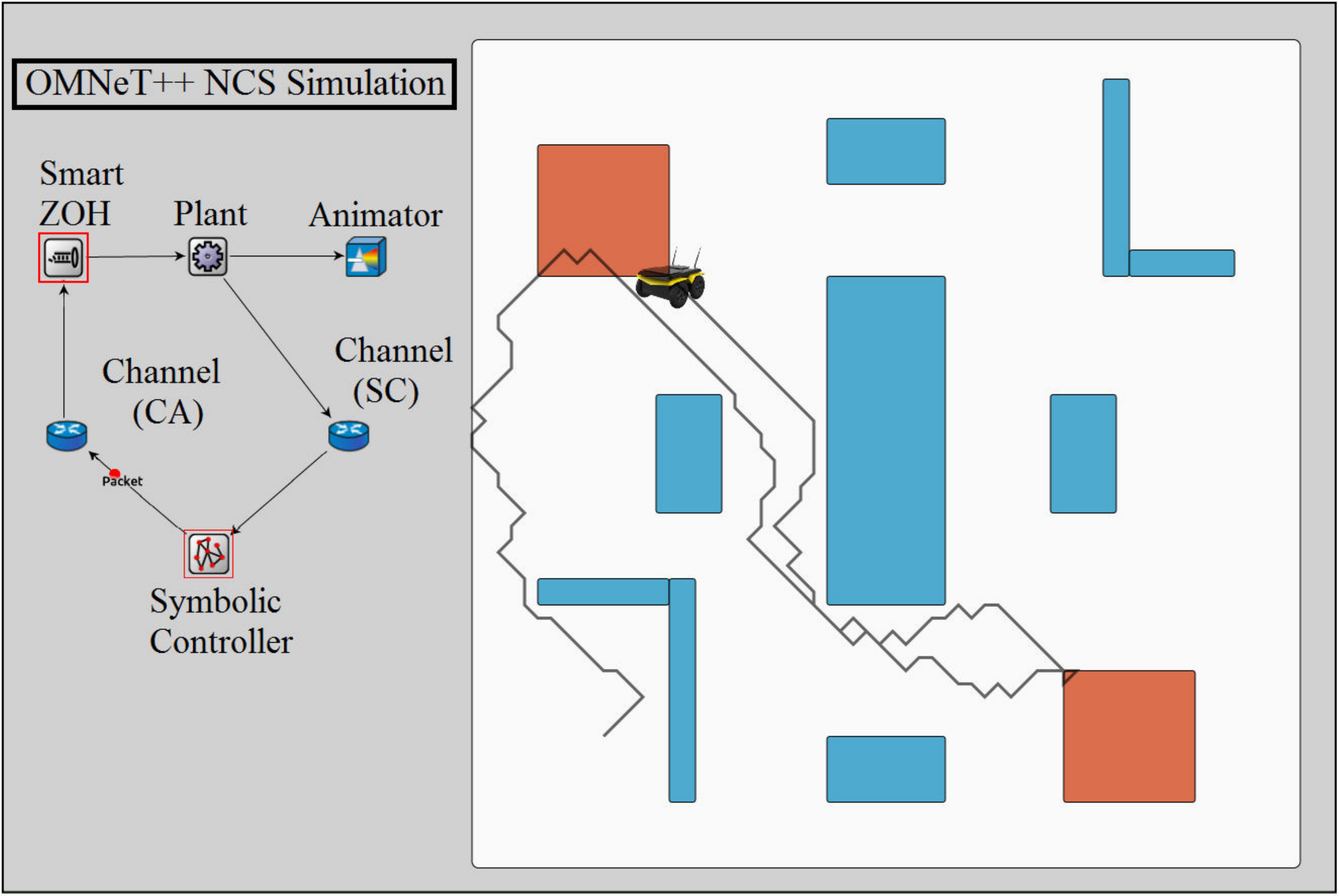}
	\caption{Closed loop simulation of the NCS with the robot system in \texttt{OMNeT++}. On the left, we illustrate how the packets move between different parts of the network. On the right, the movement of the robot over the state set is illustrated.}
	\label{FIG_EX_SIM_OMNET_ROBOT}
\end{figure}

\section{Relationship to Adjacent Work}
Both our work and the ones in \cite{pola4,pola5} explicitly consider the network non-idealities (i), (ii), and (iv) acting on the NCS simultaneously. The results in \cite{pola4,pola5} provide symbolic models obtained via gridding techniques (discretization of state and control sets), which practically are likely to severely suffer from the curse of dimensionality. However, in our proposed framework one can directly employ available and well investigated symbolic models obtained exclusively for the plant, including full grid-based approaches \cite{girard2,majid}, or newer partial grid-based ones \cite{majid10}, and non-grid-based ones \cite{boyan}, and then construct symbolic models for the overall NCS. While the results in \cite{pola4,pola5} do not consider the possibility of out-of-order packet arrivals and message rejections, i.e. the effect of older data being neglected because more recent data is available, we consider them in this work. The results in \cite{pola4,pola5} only consider static (i.e. memoryless) symbolic controllers, whereas general temporal logic specifications often are shown to require dynamic (i.e. with memory) symbolic controllers \cite{katoen08}, which are indeed allowed in our framework (cf. the example section). While the results in \cite{pola4,pola5} can only address specifications expressed in terms of specific types of nondeterministic automata, our results enable the study of larger classes of logical specifications, 
such as those expressed as general LTL formulae (cf. the example section) or as automata on infinite strings.

Besides these differences, the fundamental distinguishing feature of our work with respect to the recent contributions in \cite{pola4,pola5} is the nature of the triggering mechanism for message transmission: 
\cite{pola4,pola5} consider an event-triggered mechanism, 
in which new sensor measurements are transmitted only once the actuator is updated with the control input computed using the last transmitted measurement. 
While preventing measurements from arriving out of order, this restricts the applicability to systems in which sensors and actuators are co-located. In our approach, on the other hand, the sensors and controllers send new measurements/control updates in a periodic fashion.  This forces dealing explicitly with out-of-order messages, but in exchange it removes any restriction on the location of sensors, controllers, or actuators. Note that additionally, our formulation still allows to capture implementations with transmissions of measurements/control updates triggered by the satisfaction of certain conditions (i.e. event-based control), by encoding such restrictions in the plant model.

\section{Discussion and Conclusions}
In this paper we have provided a construction of symbolic models for NCS, subject to the following non-idealities: 
variable communication delays, quantization errors, packet losses, and limited bandwidth. 
This novel approach is practically relevant since it can leverage any existing symbolic model for the plant, 
and in particular is not limited to grid-based ones and extendible to work over stochastic plants -- both features are current focus of active investigation elsewhere. 
Furthermore, this approach can be applied to treat any specification expressed as a formula in LTL (cf. the example section) or as an automaton on infinite strings, without requiring any additional re-formulation.  

Future work will concentrate on the following goals: 
1) the construction of symbolic models for NCS with explicit probabilistic structure over the transmission intervals, communication delays, and packet dropouts; 
2) the construction of symbolic models for still more general NCS,  
by considering additional network non-idealities, in particular time-varying sampling and transmission intervals; and
3) the study of interconnections and synthesis employing the different outputs enabled by our abstractions at the sensor and controller side.

\section{Acknowledgments}
The authors would like to thank Matthias Rungger for fruitful technical discussions over Subsection \ref{band}.

\bibliographystyle{alpha}
\bibliography{reference}

\section{Appendix}
\begin{proof}[Proof of Theorem \ref{main_theorem}]
We start by proving $S_*(\widetilde{\Sigma})\preceq_{\mathcal{AS}}^{\varepsilon}S(\widetilde{\Sigma})$. Since $S_\params(\Sigma)\preceq_{\mathcal{AS}}^{\varepsilon}S_\tau(\Sigma)$, there exists an alternating $\varepsilon$-approximate simulation relation ${R}$ from $S_\params(\Sigma)$ to $S_\tau(\Sigma)$. Consider the relation $\widetilde{R}\subseteq X_{*}\times X$ defined by $\left(\mathsf{x}_*,\mathsf{x}\right)\in{\widetilde{R}}$, where $\mathsf{x}_*=\Big(x_{*1},\ldots,x_{*N^{\text{sc}}_{\max}},u_{*1},\ldots,u_{*N^{\text{ca}}_{\max}},\widetilde{N}_{*1},\ldots,\widetilde{N}_{*N_{\max}^{\text{sc}}},\widehat{N}_{*1},\ldots,\widehat{N}_{*N^{\text{ca}}_{\max}}\Big)$ and $\mathsf{x}=\Big(x_1,\ldots,x_{N^{\text{sc}}_{\max}},\upsilon_{1},\ldots,\upsilon_{N^{\text{ca}}_{\max}},\widetilde{N}_{1},\ldots,\widetilde{N}_{N_{\max}^{\text{sc}}},\widehat{N}_{1},\ldots,\widehat{N}_{N^{\text{ca}}_{\max}}\Big)$, if and only if $\widetilde{N}_{*i}=\widetilde{N}_{i}$, $\forall i\in\left[1;N^{\text{sc}}_{\max}\right]$, $\widehat{N}_{*j}=\widehat{N}_{j}$, $\forall j\in[1;N^{\text{ca}}_{\max}]$, $\left(x_{*k},x_{k}\right)\in{R}$, $\forall k\in\left[1;N^{\text{sc}}_{\max}\right]$, and for each $u_{*i}$ and the corresponding $\upsilon_i$ there exists $x'_*\in\textbf{Post}_{u_{*i}}(x_*)$ such that $\left(x'_*,\xi_{x\upsilon_i}(\tau)\right)\in{R}$ for any $i\in\left[1;N_{\max}^{\text{ca}}\right]$ and any $\left(x_*,x\right)\in{R}$. Note that if $U_\tau=U_\params$ and they are finite then the last condition of the relation $\widetilde{R}$ is nothing more than requiring $u_{*i}=\upsilon_i$ for any $i\in\left[1;N_{\max}^{\text{ca}}\right]$.

Consider $\mathsf{x}_{*0}\Let\big(x_{*0},q,\ldots,q,u_{*0},\ldots,u_{*0},N^{\text{sc}}_{\max},\ldots,N^{\text{sc}}_{\max},N^{\text{ca}}_{\max},\ldots,N^{\text{ca}}_{\max}\big)\in X_{*0}$. Due to the relation $R$, there exist $x_0\in X_{\tau0}$ such that $\left(x_{*0},x_{0}\right)\in{R}$ and $\upsilon_0\in U_\tau$ such that there exists $x'_*\in\mathbf{Post}_{u_{*0}}\left(x_{*}\right)$ satisfying $\left(x'_*,\xi_{x\upsilon_0}(\tau)\right)\in{R}$ for any $\left(x_*,x\right)\in{R}$. Hence, by choosing $\mathsf{x}_0\Let(x_{0},q,\ldots,q,\upsilon_{0},\ldots,\upsilon_{0},N^{\text{sc}}_{\max},\ldots,N^{\text{sc}}_{\max},N^{\text{ca}}_{\max},\\\ldots,N^{\text{ca}}_{\max})\in X_{0}$, one gets $\left(\mathsf{x}_{*0},\mathsf{x}_0\right)\in \widetilde{R}$ and condition (i) in Definition \ref{AASR} is satisfied.

Now consider any $\left(\mathsf{x}_*,\mathsf{x}\right)\in{\widetilde{R}}$, where $\mathsf{x}_*=\Big(x_{*1},\ldots,x_{*N^{\text{sc}}_{\max}},u_{*1},\ldots,u_{*N^{\text{ca}}_{\max}},\widetilde{N}_{*1},\ldots,\widetilde{N}_{*N_{\max}^{\text{sc}}},\widehat{N}_{*1},\ldots,\widehat{N}_{*N^{\text{ca}}_{\max}}\Big)$ and $\mathsf{x}=\Big(x_1,\ldots,x_{N^{\text{sc}}_{\max}},\upsilon_{1},\ldots,\upsilon_{N^{\text{ca}}_{\max}},\widetilde{N}_{1},\ldots,\widetilde{N}_{N_{\max}^{\text{sc}}},\widehat{N}_{1},\ldots,\widehat{N}_{N^{\text{ca}}_{\max}}\Big)$. Using definitions of $S_*(\widetilde{\Sigma})$ and $S(\widetilde{\Sigma})$, one obtains $H_*\left(\mathsf{x}_*\right)=H_\params(x_{*1})$ and $H\left(\mathsf{x}\right)=H_\tau(x_{1})$. Since $\left(x_{*1},x_{1}\right)\in{R}$, one gets $\mathsf{d}_{Y_\tau}\left(H_\params\left(x_{*1}\right),H_\tau\left(x_1\right)\right)\leq\varepsilon$ and, hence, condition (ii) in Definition \ref{AASR} is satisfied.

Let us now show that condition (iii) in Definition \ref{AASR} holds. Consider any $\left(\mathsf{x}_*,\mathsf{x}\right)\in{\widetilde{R}}$, where $\mathsf{x}_*=\Big(x_{*1},\ldots,\\x_{*N^{\text{sc}}_{\max}},u_{*1},\ldots,u_{*N^{\text{ca}}_{\max}},\widetilde{N}_{*1},\ldots,\widetilde{N}_{*N_{\max}^{\text{sc}}},\widehat{N}_{*1},\ldots,\widehat{N}_{*N^{\text{ca}}_{\max}}\Big)$, $\mathsf{x}=\Big(x_1,\ldots,x_{N^{\text{sc}}_{\max}},\upsilon_{1},\ldots,\upsilon_{N^{\text{ca}}_{\max}},\widetilde{N}_{1},\ldots,\widetilde{N}_{N_{\max}^{\text{sc}}},\\\widehat{N}_{1},\ldots,\widehat{N}_{N^{\text{ca}}_{\max}}\Big)$. 
Consider any $u_*\in{U_*(\mathsf{x}_*)}$. Using the relation ${R}$, there exist $\upsilon\in U(\mathsf{x})$ and $\grave{x}_*\in\mathbf{Post}_{u_*}(x_{*})$ such that $(\grave{x}_*,\xi_{x\upsilon}(\tau))\in{R}$ for any $(x_*,x)\in{R}$. Now consider any $\mathsf{x}'=\Big(x',x_1,\ldots,x_{N^{\text{sc}}_{\max}-1},\upsilon,\upsilon_{1},\ldots,\upsilon_{N^{\text{ca}}_{\max}-1},\\\widetilde{N},\widetilde{N}_{1},\ldots,\widetilde{N}_{N_{\max}^{\text{sc}}-1},\widehat{N},\widehat{N}_{1},\ldots,\widehat{N}_{N^{\text{ca}}_{\max}-1}\Big)\in\mathbf{Post}_{\upsilon}(\mathsf{x})\subseteq{X}$ for some $\widetilde{N}\in\left[N^{\text{sc}}_{\min};N^{\text{sc}}_{\max}\right]$ and $\widehat{N}\in\left[N^{\text{ca}}_{\min};N^{\text{ca}}_{\max}\right]$ where $x'=\xi_{x_1\upsilon_k}\left(\tau\right)$ for some given $k\in\left[N^{\text{ca}}_{\min};N^{\text{ca}}_{\max}\right]$ (cf. Definition $S(\widetilde{\Sigma})$). Because of the relation $R$, there exists $x'_*\in\mathbf{Post}_{u_{*k}}(x_{*1})$ in $S_\params(\Sigma)$ such that $\left(x'_*,x'\right)\in{R}$. Hence, due to the definition $S_*(\widetilde\Sigma)$, one can choose $\mathsf{x}'_*=\Big(x'_*,x_{*1},\ldots,x_{*\left(N^{\text{sc}}_{\max}-1\right)},u_*,u_{*1},\ldots,u_{*\left(N^{\text{ca}}_{\max}-1\right)},\widetilde{N},\widetilde{N}_{1},\ldots,\widetilde{N}_{N_{\max}^{\text{sc}}-1},\widehat{N},\widehat{N}_{1},\ldots,\widehat{N}_{N^{\text{ca}}_{\max}-1}\Big)\in\mathbf{Post}_{u_*}(\mathsf{x}_*)\subseteq{X}_*$. Due to the relation ${R}$, one can readily verify that $\mathsf{d}_{Y_\tau}\left( H_\params(x'_*),H_\tau(x')\right)\leq\varepsilon$. Hence, one gets $\mathsf{d}_{Y}\left(H_*\left(\mathsf{x}'_*\right),H\left(\mathsf{x}'\right)\right)=\mathsf{d}_{Y_\tau}\left(H_\params\left(x'_{*}\right),H_\tau\left(x'\right)\right)\leq\varepsilon$. Hence, $\left(\mathsf{x}'_*,\mathsf{x}'\right)\in{\widetilde{R}}$ implying that condition (iii) in Definition \ref{AASR} holds.

Now we prove $S(\widetilde\Sigma)\preceq_{\mathcal{S}}^{\varepsilon}S_*(\widetilde\Sigma)$. Since $S_\tau(\Sigma)\preceq_{\mathcal{S}}^{\varepsilon}S_\params(\Sigma)$, there exists an $\varepsilon$-approximate simulation relation ${R}$ from $S_\tau(\Sigma)$ to $S_\params(\Sigma)$. Consider the relation $\widetilde{R}\subseteq X\times X_*$ defined by $\left(\mathsf{x},\mathsf{x}_*\right)\in{\widetilde{R}}$, where $\mathsf{x}=\Big(x_1,\ldots,x_{N^{\text{sc}}_{\max}},\upsilon_{1},\ldots,\upsilon_{N^{\text{ca}}_{\max}},\widetilde{N}_{1},\ldots,\widetilde{N}_{N_{\max}^{\text{sc}}},\widehat{N}_{1},\ldots,\widehat{N}_{N^{\text{ca}}_{\max}}\Big)$ and $\mathsf{x}_*=\Big(x_{*1},\ldots,x_{*N^{\text{sc}}_{\max}},u_{*1},\ldots,u_{*N^{\text{ca}}_{\max}},\widetilde{N}_{*1},\\\ldots,\widetilde{N}_{*N_{\max}^{\text{sc}}},\widehat{N}_{*1},\ldots,\widehat{N}_{*N^{\text{ca}}_{\max}}\Big)$, if and only if $\widetilde{N}_{i}=\widetilde{N}_{*i}$, $\forall i\in\left[1;N^{\text{sc}}_{\max}\right]$, $\widehat{N}_{j}=\widehat{N}_{*j}$, $\forall j\in[1;N^{\text{ca}}_{\max}]$, $\left(x_{k},x_{*k}\right)\in{R}$, $\forall k\in\left[1;N^{\text{sc}}_{\max}\right]$, and for each $\upsilon_i$ and the corresponding $u_{*i}$ there exists a $x'_*\in\textbf{Post}_{u_{*i}}(x_*)$ such that $\left(\xi_{x\upsilon_i}(\tau),x'_*\right)\in{R}$ for any $i\in\left[1;N_{\max}^{\text{ca}}\right]$ and any $\left(x,x_*\right)\in{R}$. Note that if $U_\tau=U_\params$ and they are finite then the last condition of the relation $\widetilde{R}$ is nothing more than requiring $u_{*i}=\upsilon_i$ for any $i\in\left[1;N_{\max}^{\text{ca}}\right]$.

Consider $\mathsf{x}_{0}\Let\big(x_{0},q,\ldots,q,\upsilon_{0},\ldots,\upsilon_{0},N^{\text{sc}}_{\max},\ldots,N^{\text{sc}}_{\max},N^{\text{ca}}_{\max},\ldots,N^{\text{ca}}_{\max}\big) \in X_{0}$. Due to the relation ${R}$, there exist $x_{*0}\in X_{*0}$ such that $\left(x_{0},x_{*0}\right)\in{R}$ and $u_{*0}\in U_\params$ such that there exists $x'_*\in\mathbf{Post}_{u_{*0}}\left(x_{*}\right)$ satisfying $(\xi_{x\upsilon_0}(\tau),x'_*)\in{R}$ for any $(x,x_*)\in{R}$. Hence, by choosing $\mathsf{x}_{*0}\Let\big(x_{*0},q,\ldots,q,u_{*0},\ldots,u_{*0},N^{\text{sc}}_{\max},\ldots,N^{\text{sc}}_{\max},\\N^{\text{ca}}_{\max},\ldots,N^{\text{ca}}_{\max}\big)\in X_{*0}$, one gets $\left(\mathsf{x}_{0},\mathsf{x}_{*0}\right)\in \widetilde{R}$ and condition (i) in Definition \ref{ASR} is satisfied.

Now consider any $\left(\mathsf{x},\mathsf{x}_*\right)\in{\widetilde{R}}$, where $\mathsf{x}=\Big(x_1,\ldots,x_{N^{\text{sc}}_{\max}},\upsilon_{1},\ldots,\upsilon_{N^{\text{ca}}_{\max}},\widetilde{N}_{1},\ldots,\widetilde{N}_{N_{\max}^{\text{sc}}},\widehat{N}_{1},\ldots,\widehat{N}_{N^{\text{ca}}_{\max}}\Big)$ and $\mathsf{x}_*=\Big(x_{*1},\ldots,x_{*N^{\text{sc}}_{\max}},u_{*1},\ldots,u_{*N^{\text{ca}}_{\max}},\widetilde{N}_{*1},\ldots,\widetilde{N}_{*N_{\max}^{\text{sc}}},\widehat{N}_{*1},\ldots,\widehat{N}_{*N^{\text{ca}}_{\max}}\Big)$. Using definitions of $S(\widetilde{\Sigma})$ and $S_*(\widetilde{\Sigma})$, one obtains $H\left(\mathsf{x}\right)=H_\tau(x_{1})$ and $H_*\left(\mathsf{x}\right)=H_\params(x_{*1})$. Since $\left(x_{1},x_{*1}\right)\in{R}$, one gets $\mathsf{d}_{Y_\tau}\left(H_\tau\left(x_{1}\right),H_\params\left(x_{*1}\right)\right)\\\leq\varepsilon$ and, hence, condition (ii) in Definition \ref{ASR} is satisfied.

Let us now show that condition (iii) in Definition \ref{ASR} holds. Consider any $\left(\mathsf{x},\mathsf{x}_*\right)\in{\widetilde{R}}$, where $\mathsf{x}=\Big(x_1,\ldots,x_{N^{\text{sc}}_{\max}},\\\upsilon_{1},\ldots,\upsilon_{N^{\text{ca}}_{\max}},\widetilde{N}_{1},\ldots,\widetilde{N}_{N_{\max}^{\text{sc}}},\widehat{N}_{1},\ldots,\widehat{N}_{N^{\text{ca}}_{\max}}\Big)$, $\mathsf{x}_*=\Big(x_{*1},\ldots,x_{*N^{\text{sc}}_{\max}},u_{*1},\ldots,u_{*N^{\text{ca}}_{\max}},\widetilde{N}_{*1},\ldots,\widetilde{N}_{*N_{\max}^{\text{sc}}},\widehat{N}_{*1},\\\ldots,\widehat{N}_{*N^{\text{ca}}_{\max}}\Big)$. 
Consider any $\upsilon\in{U(\mathsf{x})}$. Using the relation ${R}$, there exist $u_*\in U_*(\mathsf{x}_*)$ and $\grave{x}_*\in\mathbf{Post}_{u_*}(x_{*})$ such that $\left(\xi_{x\upsilon}(\tau),\grave{x}_*\right)\in{R}$ for any $\left(x,x_*\right)\in{R}$. Now consider any $\mathsf{x}'=\Big(x',x_1,\ldots,x_{N^{\text{sc}}_{\max}-1},\upsilon,\upsilon_{1},\ldots,\upsilon_{N^{\text{ca}}_{\max}-1},\widetilde{N},\widetilde{N}_{1},\\\ldots,\widetilde{N}_{N_{\max}^{\text{sc}}-1},\widehat{N},\widehat{N}_{1}\ldots,\widehat{N}_{N^{\text{ca}}_{\max}-1}\Big)\in\mathbf{Post}_{\upsilon}(\mathsf{x})\subseteq{X}$ for some $\widetilde{N}\in\left[N^{\text{sc}}_{\min};N^{\text{sc}}_{\max}\right]$ and $\widehat{N}\in\left[N^{\text{ca}}_{\min};N^{\text{ca}}_{\max}\right]$ where $x'=\xi_{x\upsilon_k}\left(\tau\right)$ for some given $k\in\left[{N^{\text{ca}}_{\min}};{N^{\text{ca}}_{\max}}\right]$ (cf. Definition $S(\widetilde{\Sigma})$). Because of the relation $R$, there exists $x'_*\in\mathbf{Post}_{u_{*k}}(x_{*1})$ in $S_\params(\Sigma)$ such that $\left(x',x'_*\right)\in{R}$. Hence, due to the definition $S_*(\widetilde\Sigma)$, one can choose $\mathsf{x}'_*=\Big(x'_*,x_{*1},\ldots,x_{*\left(N^{\text{sc}}_{\max}-1\right)},u_*,u_{*1},\ldots,u_{*\left(N^{\text{ca}}_{\max}-1\right)},\widetilde{N},\widetilde{N}_{1},\ldots,\widetilde{N}_{N_{\max}^{\text{sc}}-1},\widehat{N},\widehat{N}_{1},\ldots,\widehat{N}_{N^{\text{ca}}_{\max}-1}\Big)\in\mathbf{Post}_{u_*}(\mathsf{x}_*)\subseteq{X}_*$. Due to the relation ${R}$, one can readily verify that $\mathsf{d}_{Y_\tau}\left( H_\tau(x'),H_\params(x'_*)\right)\leq\varepsilon$. Hence, one gets $\mathsf{d}_{Y}\left(H\left(\mathsf{x}'\right),H_*\left(\mathsf{x}'_*\right)\right)=\mathsf{d}_{Y_\tau}\left(H_\tau\left(x'\right),H_\params\left(x'_*\right)\right)\leq\varepsilon$. Hence, $\left(\mathsf{x}',\mathsf{x}'_*\right)\in{\widetilde{R}}$ implying that condition (iii) in Definition \ref{ASR} holds, which completes the proof.
\end{proof}

\begin{proof}[Proof of Corollary \ref{main_theorem1}]
Using Theorem \ref{main_theorem} one gets that $S_\params(\Sigma)\preceq_{\mathcal{AS}}^{\varepsilon}S_\tau(\Sigma)$ implies \mbox{$S_{*}(\widetilde\Sigma)\preceq_{\mathcal{AS}}^{\varepsilon}S(\widetilde\Sigma)$} equipped with the alternating $\varepsilon$-approximate simulation relation $\widetilde{R}$ as defined in the proof of Theorem \ref{main_theorem}. In a similar way, one can show that $S_\tau(\Sigma)\preceq_{\mathcal{AS}}^{\varepsilon}S_\params(\Sigma)$ implies \mbox{$S(\widetilde\Sigma)\preceq_{\mathcal{AS}}^{\varepsilon}S_{*}(\widetilde\Sigma)$} equipped with the alternating $\varepsilon$-approximate simulation relation $\widetilde{R}^{-1}$ which completes the proof.
\end{proof}

\begin{proof}[Proof of Lemma \ref{less}]
Let $S_*=(X_*,X_{*0},U_*,\rTo_*,Y_*,H_*)$ and $S'_*=(X'_*,X'_{*0},U'_*,{\rTo_*}',Y'_*,H'_*)$. Consider the relation $\widetilde{R}\subseteq X_{*}\times X'_*$ defined by $\left(\mathsf{x}_*,\mathsf{x}'_*\right)\in{\widetilde{R}}$, where $\mathsf{x}_*=\Big(x_{*1},\ldots,x_{*N^{\text{sc}}_{\max}},u_{*1},\ldots,u_{*N^{\text{ca}}_{\max}},\widetilde{N}_{*1},\ldots,\widetilde{N}_{*N_{\max}^{\text{sc}}},\\\widehat{N}_{*1},\ldots,\widehat{N}_{*N^{\text{ca}}_{\max}}\Big)$ and $\mathsf{x}'_*=\Big(x'_{*1},\ldots,x'_{*N^{\text{sc}}_{\max}},u'_{*1},\ldots,u'_{*N^{\text{ca}}_{\max}},N_{\max}^{\text{sc}},\ldots,N_{\max}^{\text{sc}},N^{\text{ca}}_{\max},\ldots,N^{\text{ca}}_{\max}\Big)$, if and only if $\widetilde{N}_{*i}=N_{\max}^{\text{sc}}$, $\forall i\in\left[1;N^{\text{sc}}_{\max}\right]$, $\widehat{N}_{*j}=N_{\max}^{\text{ca}}$, $\forall j\in[1;N^{\text{ca}}_{\max}]$, $x_{*k}=x'_{*k}$, $\forall k\in\left[1;N^{\text{sc}}_{\max}\right]$, and $u_{*k}=u'_{*k}$, $\forall k\in\left[1;N^{\text{ca}}_{\max}\right]$.

Consider $\mathsf{x}_{*0}\Let\big(x_{*0},q,\ldots,q,u_{*0},\ldots,u_{*0},N^{\text{sc}}_{\max},\ldots,N^{\text{sc}}_{\max},N^{\text{ca}}_{\max},\ldots,N^{\text{ca}}_{\max}\big)\in X_{*0}$. One can readily verify that $\mathsf{x}_{*0}\in X'_{*0}$ and, hence, $\left(\mathsf{x}_{*0},\mathsf{x}_{*0}\right)\in \widetilde{R}$. Therefore, condition (i) in Definition \ref{AASR} is satisfied.

Now consider any $\left(\mathsf{x}_*,\mathsf{x}'_*\right)\in{\widetilde{R}}$, where $\mathsf{x}_*=\Big(x_{*1},\ldots,x_{*N^{\text{sc}}_{\max}},u_{*1},\ldots,u_{*N^{\text{ca}}_{\max}},\widetilde{N}_{*1},\ldots,\widetilde{N}_{*N_{\max}^{\text{sc}}},\widehat{N}_{*1},\ldots,\widehat{N}_{*N^{\text{ca}}_{\max}}\Big)$ and $\mathsf{x}'_*=\Big(x'_{*1},\ldots,x'_{*N^{\text{sc}}_{\max}},u'_{*1},\ldots,u'_{*N^{\text{ca}}_{\max}},N_{\max}^{\text{sc}},\ldots,N_{\max}^{\text{sc}},N^{\text{ca}}_{\max},\ldots,N^{\text{ca}}_{\max}\Big)$. Using definitions of $S_*$ and $S'_*$, one obtains $H_*\left(\mathsf{x}_*\right)=H_a(x_{*1})$ and $H'_*\left(\mathsf{x}'_*\right)=H_a(x'_{*1})$. Based on the definition of $\widetilde R$, one obtains $x_{*1}=x'_{*1}$ and, hence, $\mathsf{d}_{Y_a}\left(H_a\left(x_{*1}\right),H_a\left(x'_{*1}\right)\right)=0$. Therefore, condition (ii) in Definition \ref{AASR} is satisfied.

Let us now show that condition (iii) in Definition \ref{AASR} holds. Consider any $\left(\mathsf{x}_*,\mathsf{x}'_*\right)\in{\widetilde{R}}$, where $\mathsf{x}_*=\Big(x_{*1},\ldots,x_{*N^{\text{sc}}_{\max}},u_{*1},\ldots,u_{*N^{\text{ca}}_{\max}},\widetilde{N}_{*1},\ldots,\widetilde{N}_{*N_{\max}^{\text{sc}}},\widehat{N}_{*1},\ldots,\widehat{N}_{*N^{\text{ca}}_{\max}}\Big)$ and $\mathsf{x}'_*=\Big(x'_{*1},\ldots,x'_{*N^{\text{sc}}_{\max}},u'_{*1},\ldots,u'_{*N^{\text{ca}}_{\max}},\\N_{\max}^{\text{sc}},\ldots,N_{\max}^{\text{sc}},N^{\text{ca}}_{\max},\ldots,N^{\text{ca}}_{\max}\Big)$. 
Consider any $u_*\in U_*(\mathsf{x}_*)$. Using the relation ${\widetilde R}$, one can readily verify that $u_*\in{U'_*(\mathsf{x}'_*)}$. Now consider any $\hat{\mathsf{x}}'_*=\Big(\hat x',x'_{*1},\ldots,x'_{*(N^{\text{sc}}_{\max}-1)},u'_*,u'_{*1},\ldots,u'_{*(N^{\text{ca}}_{\max}-1)},N_{\max}^{\text{sc}},\ldots,N_{\max}^{\text{sc}},\\N^{\text{ca}}_{\max},\ldots,N^{\text{ca}}_{\max}\Big)\in\mathbf{Post}_{u_*}(\mathsf{x}'_*)\subseteq{X'_*}$ where $\hat x'\in\mathbf{Post}_{u_{*N^{\text{ca}}_{\max}}}(x'_{*1})$ in $S_a$. Because of the relation $\widetilde R$, one gets $u_{*(N^{\text{ca}}_{\max}-j_*^k)}=u_{*N^{\text{ca}}_{\max}}$ and $x'_{*1}=x_{*1}$ and, hence, $\hat x'\in\mathbf{Post}_{u_{*N^{\text{ca}}_{\max}}}(x_{*1})$ in $S_a$. Hence, one can choose $\hat{\mathsf{x}}_*=\Big(\hat x'_*,x_{*1},\ldots,x_{*\left(N^{\text{sc}}_{\max}-1\right)},u_*,u_{*1},\ldots,u_{*\left(N^{\text{ca}}_{\max}-1\right)},N^{\text{sc}}_{\max},\widetilde{N}_{1},\ldots,\widetilde{N}_{N_{\max}^{\text{sc}}-1},N^{\text{ca}}_{\max},\widehat{N}_{1},\ldots,\widehat{N}_{N^{\text{ca}}_{\max}-1}\Big)$, where $\hat{\mathsf{x}}_*\in\mathbf{Post}_{u_*}(\mathsf{x}_*)\subseteq{X}_*$. Since $\left(\mathsf{x}_*,\mathsf{x}'_*\right)\in{\widetilde{R}}$, one already has $\widetilde{N}_{*i}=N_{\max}^{\text{sc}}$, $\forall i\in\left[1;N^{\text{sc}}_{\max}-1\right]$, $\widehat{N}_{*j}=N_{\max}^{\text{ca}}$, $\forall j\in[1;N^{\text{ca}}_{\max}-1]$, $x_{*k}=x'_{*k}$, $\forall k\in\left[1;N^{\text{sc}}_{\max}-1\right]$, and $u_{*k}=u'_{*k}$, $\forall k\in\left[1;N^{\text{ca}}_{\max}-1\right]$. Therefore, one can readily verify that $\left(\hat{\mathsf{x}}_*,\hat{\mathsf{x}}'_*\right)\in{\widetilde{R}}$ implying that condition (iii) in Definition \ref{AASR} holds which completes the proof.
\end{proof}

\end{document}